\documentclass[12pt,bezier]{article}
\usepackage{amssymb}
\usepackage{mathrsfs}
\usepackage{amsmath}
\usepackage{amsfonts,amsthm,amssymb}
\usepackage{amsfonts}
\usepackage{graphics}
\usepackage{graphicx}
\usepackage{color}
\usepackage{ifpdf}
\usepackage{fancyhdr}
\usepackage{epstopdf}
\usepackage{epsfig}
\usepackage{indentfirst}
\usepackage{CJK}
\usepackage{float}
\usepackage{mathtools}

\newtheorem{lem}{Lemma}[section]
\newtheorem{thm}[lem]{Theorem}
\newtheorem{cor}[lem]{Corollary}

\newtheorem{que}[lem]{Question}
\newtheorem{exm}[lem]{Example}

\renewcommand{\thefootnote}{\fnsymbol{footnote}}

\allowdisplaybreaks
\begin{document}
	\title{Upper bounding the coefficients of Tutte polynomials with a variable fixed}
	\author{\CJKfamily{fs}\small  Yuxin Liu$^{a}$\quad Weihua Yang$^{a}$ \quad Xiaxia Guan$^{a*}$\\
			\CJKfamily{kai}\small \textit{$^a$Department of Mathematics,\ Taiyuan University of Technology, P. R. China}\\
		}\date{}
	\maketitle 
	\maketitle
	 
		\begingroup
		\renewcommand{\thefootnote}{}
		\footnotetext{E-mail: guanxiaxia@tyut.edu.cn}
		\endgroup

		{\small{\bf Abstract:}
		The Tutte polynomial is a fundamental invariant of graphs and matroids. For a matroid $M$ of rank $r$, the Tutte polynomial $T_{M}(x,y)$ of $M$ serves as a generalization of the characteristic polynomial $\chi_{M}(x)$, with the relationship given by  $\chi_{M}(x)=(-1)^{r}T_{M}(1-x,0)$. For any real number $t$ satisfying $0\leq t\leq 1$, under suitable exclusion hypotheses on \(M\), we establish an upper bound on the coefficients of the polynomial $T_{M}(x,t)$ (resp., $T_{M}(t,y)$) in terms of  girth $g$ and the number of circuits of size $g$ (resp.,  maximum cardinality $h$ of hyperplanes and the number of hyperplanes of size $h$). The resulting characteristic polynomial bound extends the bound
		of Delaplane and Wu \emph{[Discrete Math.\ 312(12--13) (2012) 2062--2068]} to simple $U_{g-1,g+1}$-free matroids satisfying $n\ge r+c_g$, thereby giving a partial affirmative answer to the question of extending their upper bound beyond binary matroids.

		\section{Introduction}
		
		Matroids were introduced by Whitney \cite{Whitney} in 1935. Let $E$ be a finite set and let $rk: 2^{E}\rightarrow \mathbb{N}$ be a mapping satisfying the following three conditions:
		\begin{itemize}
			\item [(R1)] $0\leq rk(E')\leq |E'| \text{ for any } E'\subseteq E$;
			\item [(R2)] $rk(E')\leq rk(E'') \text{ whenever } E'\subseteq E'' \subseteq E$;
			\item [(R3)]  $rk(E')+ rk(E'')\geq rk(E'\cup E'')+ rk(E'\cap E'') \text{ for any } E', E'' \subseteq E$.
		\end{itemize}
		Define $$\mathcal{I}=\{E'\subseteq E: rk(E')=|E'|\}.$$
		Then $M=(E,rk)$ is called a \emph{matroid}, where $E$ is called the \emph{ground\ set} of $M$, $rk$ is called the \emph{rank\ function} of $M$, and each member in $\mathcal{I}$ is called an \emph{independent\ set} of $M$. A maximal independent set in  $M$ is called a $\mathit{basis}$ or a $\mathit{base}$ of $M$. Let $\mathcal{B}(M)$ denote the set of bases of $M$. An element $e\in E$ is called a \emph{loop} of $M$ if $rk(e)=0$, and a \emph{coloop} of $M$ if $rk(E\setminus \{e\})=rk(E)-1$.
		A non-empty subset $C\subseteq E$ is called a \emph{circuit} of $M$ if $rk(C)=rk(C\setminus \{e\}) =|C|-1=|C\setminus \{e\}|$ for every $e\in C$, that is, a circuit is a nonempty minimal dependent set.
		The \emph{girth} of a matroid is  defined as the length of its shortest circuit, denoted by $g=g(M)$. A matroid $M$ is called \emph{simple} if $g\geq 3$.  For any $ X\subseteq E$, define $ \mathit{cl}(X)=\{x\in E: rk(X\cup x)=rk(X)\}. $
		If $ X=cl(X)$, then $X$ is called a $\mathit{flat}$ (or $\mathit{closed\ set}$) of $M$. We write $r=rk(M)$, and $M$ is called a matroid of rank $r$. A $\mathit{hyperplane}$ of $M$ is a flat with rank $r-1$.

		Let $M$ be a matroid with ground set $E$ and rank function $rk$. The matroids obtained by \emph{deleting} or \emph{contracting}  a subset $A\subseteq E$ from $M$ are denoted by $M\setminus A$ and $M/A$, respectively. For any $X \subseteq E\setminus A$, their rank functions are given by
		$$rk_{M\setminus A}(X)=rk_{M}(X) \text{ and }  rk_{M/A}(X)=rk_{M}(X \cup A)-rk_{M}(A).$$	 
		If $A=\{e\}$, we write $M\setminus \{e\}$ and $M/\{e\}$ as $M\setminus e$ and $M/e$, respectively.
		
		The Tutte polynomial \cite{Tutte} is a fundamental and widely studied invariant in graph theory and matroid theory (extended from graphs to matroids by Crapo \cite{Crapo}). It has wide-ranging applications in combinatorics, statistical physics, knot theory, quantum group theory, and other fields.
		
		For any matroid $ M $ with ground set $E$, the $\mathit{Tutte\ polynomial}$ $ T_{M}(x,y) $ of $ M $ is defined by the following recurrence relation: if $ E = \emptyset $, then $ T_{M}(x,y) =1 $; if $ E \neq  \emptyset $ and $ e\in E $, then
		\begin{equation*}\label{Tutte-DC}
			T_{M}(x,y)=\left\{\begin{array}{ll}
				y\,T_{M\setminus e}(x,y),&\text{if } e \text{ is a loop of } M,\\
				x\,T_{M/ e}(x,y),&\text{if } e \text{ is a coloop of } M,\\
				T_{M\setminus e}(x,y) + T_{M/e}(x,y),&\text{if } e \text{ is neither a loop nor a coloop of } M.
			\end{array}\right.
		\end{equation*}
		
		Throughout the paper, for integers $m$ and $j$, we set
		$\binom{m}{j}=0$ unless $0\le j\le m$.
		Empty sums are interpreted as zero.
		For a polynomial $f$, the notation $[x^k]f$ denotes
		the coefficient of $x^k$ in $f$.
		
		Recently, Ma, Guan and Jin \cite{Ma} established the monotonicity of the coefficients of Tutte polynomials with one variable fixed.
		\begin{lem}\label{lem1.1}\cite{Ma}
			Let $ M_{1} $ and $ M_{2} $ be two matroids on the same ground set such that $\mathcal{B}(M_{1})\subseteq \mathcal{B}(M_{2})$. Then, for any non-negative integer $k$ and any real number $t$ with $0\le t\le 1$, we have
			\begin{align*}
				[x^{k}]T_{M_{1}}(x,t)\le [x^{k}]T_{M_{2}}(x,t) \text{ and } [y^{k}]T_{M_{1}}(t,y)\le [y^{k}]T_{M_{2}}(t,y).
			\end{align*}
		\end{lem}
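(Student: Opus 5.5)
We plan to prove Lemma~\ref{lem1.1} through the basis-activity expansion of the Tutte polynomial. Fix a linear order $<$ on the common ground set $E$. For a basis $B$, write $i(B)$ for the number of internally active elements and $e(B)$ for the number of externally active elements; both are taken with respect to the order and the ambient matroid. Then
\[
T_M(x,y)=\sum_{B\in\mathcal{B}(M)} x^{i(B)}y^{e(B)}.
\]
The standard argument goes as follows. We first show that this expansion satisfies the deletion--contraction recurrence, by removing the largest element. Uniqueness of the recursive definition then identifies the expansion with $T_M$.

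The coefficient of $x^k$ in $T_M(x,t)$ is therefore $\sum_{B:\,i_M(B)=k} t^{e_M(B)}$. The difficulty is that the activities of a fixed basis $B\in\mathcal{B}(M_1)$ can differ in $M_1$ and in $M_2$. Indeed, the fundamental cocircuit of an element $b\in B$ consists of those $f\notin B$ with $B-b+f$ a basis. Passing from $M_1$ to $M_2$ can only add such $f$, so $M_2$ yields more basis exchanges. Consequently internal activity can only decrease, $i_{M_2}(B)\le i_{M_1}(B)$, and likewise $e_{M_2}(B)\le e_{M_1}(B)$.

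Because activities can shift, a term-by-term comparison of monomials fails. We therefore prove the inequality by an alternative route that avoids tracking individual bases. The intended approach uses the specialization at $y=t$ with $0\le t\le1$. We write
\[
T_M(x,t)=\sum_{A\subseteq E}(x-1)^{r-rk(A)}(t-1)^{|A|-rk(A)},
\]
but that form has signs. It is better to use the expansion of $T_M(x,y)$ in the variables $x$ and $y$ by basis activities together with an interpolation: one transforms $M_1$ into $M_2$ by adding one basis at a time.

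This chain cannot be arbitrary, because the intermediate sets of bases need not be matroids. So instead we aim for the following per-basis claim. There is an injection $\phi$ from $\mathcal{B}(M_1)$ into $\mathcal{B}(M_2)$, depending on $k$, or alternatively a fractional charging, that sends bases $B$ with $i_{M_1}(B)=k$ to bases with $i_{M_2}=k$ and $e_{M_2}(\phi(B))\le e_{M_1}(B)$. This last condition is what is needed, since $t^{e}$ is nonincreasing in $e$ when $0\le t\le 1$. The $y$-statement then follows by duality. We have $T_{M^*}(x,y)=T_M(y,x)$, and $\mathcal{B}(M_1)\subseteq\mathcal{B}(M_2)$ implies $\mathcal{B}(M_1^*)\subseteq\mathcal{B}(M_2^*)$ by complementation, so the second inequality is the first applied to the duals.

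Constructing the matching is the main obstacle. We would first try a different decomposition: group the bases by internal activity through the interval partition of the Boolean lattice (Crapo). Each basis $B$ gives the interval $[B\setminus IA(B),\,B\cup EA(B)]$, and these intervals partition $2^E$. From this, $[x^k]T_M(x,t)$ can be rewritten as a positive combination, for $0\le t\le1$, of counts of independent or spanning sets with prescribed structure. Concretely, we substitute $t=1-s$, or use $y^{e}=\sum_j\binom{e}{j}(y-1)^j$ together with the corank--nullity form in which the $x$-dependence is expanded around $x$ rather than $x-1$. The aim is to express the coefficient as a sum, with weights nonnegative for $t\in[0,1]$, of numbers of subsets $A$ that are independent in the matroid and satisfy conditions monotone under enlarging $\mathcal{B}$, for instance that $A$ is spanning or contains a basis. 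Such counts obviously increase from $M_1$ to $M_2$. Finding that positive, monotone expansion is where the real work lies. If it fails, we fall back to a direct induction on $|E|$ by deletion--contraction on a common element that is neither a loop nor a coloop in both matroids. Since $\mathcal{B}(M_1\setminus e)\subseteq\mathcal{B}(M_2\setminus e)$ and $\mathcal{B}(M_1/e)\subseteq\mathcal{B}(M_2/e)$, the induction goes through, with separate handling of the cases where $e$ is a loop or coloop in only one of the two matroids, using $0\le t\le1$ there.
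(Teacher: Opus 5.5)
\textbf{There is a genuine gap: the proposal lists strategies but completes none of them.} The injection $\phi$ is never constructed, and the positive monotone expansion is never found. Note that the paper gives no proof of this lemma either; it is quoted from Ma, Guan and Jin, so your argument has to stand on its own. Your duality reduction of the $y$-inequality to the $x$-inequality is correct.

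The only route that can actually be finished is your fallback induction on $|E|$, and there the delicate case is waved through with an argument that does not work. The two mixed cases are not symmetric.

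\emph{Loop in $M_1$ only.} Suppose $e$ is a loop of $M_1$ but neither a loop nor a coloop of $M_2$. Then $0\le t\le 1$ does suffice. We have $\mathcal{B}(M_1\setminus e)=\mathcal{B}(M_1)\subseteq\mathcal{B}(M_2\setminus e)$, and by nonnegativity of coefficients
\[
[x^k]T_{M_1}(x,t)=t\,[x^k]T_{M_1\setminus e}(x,t)\le t\,[x^k]T_{M_2\setminus e}(x,t)\le [x^k]T_{M_2\setminus e}(x,t)\le [x^k]T_{M_2}(x,t).
\]

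\emph{Coloop in $M_1$ only.} Suppose $e$ is a coloop of $M_1$ but neither a loop nor a coloop of $M_2$. Then $[x^k]T_{M_1}(x,t)=[x^{k-1}]T_{M_1/e}(x,t)$. The inductive hypothesis, applied to $\mathcal{B}(M_1/e)\subseteq\mathcal{B}(M_2/e)$, only bounds this by $[x^{k-1}]T_{M_2/e}(x,t)$. That is a coefficient of the wrong degree, since $[x^k]T_{M_2}(x,t)=[x^k]T_{M_2\setminus e}(x,t)+[x^k]T_{M_2/e}(x,t)$. The restriction $0\le t\le 1$ does nothing to bridge this degree shift.

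\textbf{The missing idea is to choose $e$ so that the coloop case never arises.} Because $\mathcal{B}(M_1)\subseteq\mathcal{B}(M_2)$, both matroids have the same rank $r$. Every loop of $M_2$ is a loop of $M_1$, and every coloop of $M_2$ is a coloop of $M_1$.
\begin{itemize}
\item If every element is a coloop of $M_1$, then $r=|E|$, which forces $\mathcal{B}(M_2)=\{E\}=\mathcal{B}(M_1)$, and there is nothing to prove.
\item Otherwise, pick $e$ that is not a coloop of $M_1$. Then $e$ is either a loop in both matroids, or neither a loop nor a coloop in both. In those cases the recurrences and the basis inclusions match. The only remaining possibility is that $e$ is a loop of $M_1$ and neither a loop nor a coloop of $M_2$, which is the easy case above.
\end{itemize}
This closes the induction.

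Alternatively, you can handle the coloop case directly with activities, taking $N=M_2$. Order $E$ with $e$ least. Then $e$ is internally active in every basis $B\ni e$ of $N$. The other internal activities of $B$ agree with those of $B\setminus e$ in $N/e$. External activities can only grow on passing to $N/e$. Hence $[x^{k-1}]T_{N/e}(x,t)\le [x^k]T_N(x,t)$ for $0\le t\le 1$.
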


		Let $n$ and $r$ be two integers with $0\le r\le n$, and let $E$ be a set with $n$ elements. Define the rank function $rk$ as follows: for any $I\subseteq E$, 
		\begin{equation*}
			rk(I)=\left\{\begin{array}{ll}
				|I|,&\text{ if } |I|\le r,\\
				r,&\text{ if } |I|>r.
			\end{array}\right.
		\end{equation*}
		That is, the collection of independent sets is    $\mathcal{I}=\{I\subseteq E\mid |I|\le r\}$.
		The matroid $(E,rk)$ is called a \emph{uniform matroid}, denoted by $U_{r,n}$.
		
		In 2012, Merino,  Ram\'{i}rez-Ib\'{a}\~{n}ez, and Rodr\'{i}guez-S\'{a}nchez \cite{Merino} derived the Tutte polynomial of uniform matroids as follows.
		\begin{lem}\label{lem1.2}\cite{Merino}	
			The Tutte polynomial of $ U_{r,n} $ is the following:
			$$ T_{U_{r,n}}(x,y)=\sum_{i=1}^{r} \binom{n - i - 1}{n - r - 1} x^i + \sum_{j=1}^{n-r} \binom{n - j - 1}{r - 1} y^j $$
			for $0< r< n$. In addition, $T_{U_{n,n}}(x,y)=x^{n} $ and $T_{U_{0,n}}(x,y)=y^{n} $.
		\end{lem}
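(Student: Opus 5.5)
We prove Lemma~\ref{lem1.2} by induction on $n$, using the deletion--contraction recurrence that defines $T_M$. The degenerate cases come straight from the recurrence. In $U_{n,n}$ every element is a coloop, and contracting one gives $U_{n-1,n-1}$, so $T_{U_{n,n}}=x^n$. In $U_{0,n}$ every element is a loop, and deleting one gives $U_{0,n-1}$, so $T_{U_{0,n}}=y^n$.

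Now let $0<r<n$ and fix $e\in E$. Since $r\ge 1$, $e$ is not a loop. Since $n>r$, $E\setminus\{e\}$ still has rank $r$, so $e$ is not a coloop. The rank function shows that $U_{r,n}\setminus e\cong U_{r,n-1}$ and $U_{r,n}/e\cong U_{r-1,n-1}$. Hence
\[
T_{U_{r,n}}=T_{U_{r,n-1}}+T_{U_{r-1,n-1}}.
\]

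Next apply the induction hypothesis to each term and add coefficientwise.

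\emph{Generic case} $1<r<n-1$. Both minors fall under the main formula. For $1\le i\le r$, the coefficient of $x^i$ is
\[
\binom{n-i-2}{n-r-2}+\binom{n-i-2}{n-r-1}=\binom{n-i-1}{n-r-1}
\]
by Pascal's rule; the term $i=r$ of the second summand vanishes by the paper's binomial convention. For $1\le j\le n-r$, the coefficient of $y^j$ is
\[
\binom{n-j-2}{r-1}+\binom{n-j-2}{r-2}=\binom{n-j-1}{r-1},
\]
and the term $j=n-r$ of the first summand vanishes.

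\emph{Boundary case} $r=n-1$. The deletion is $U_{n-1,n-1}$, with polynomial $x^{n-1}$.

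\emph{Boundary case} $r=1$. The contraction is $U_{0,n-1}$, with polynomial $y^{n-1}$.

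In each boundary case the closed-form term supplies exactly the one extremal coefficient that Pascal's rule misses. For example, when $r=n-1$ the coefficient of $x^{n-1}$ is $1=\binom{0}{0}$. The base case $n=2$, $r=1$ is checked directly: $T_{U_{1,2}}=x+y$, which matches the formula.

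The only delicate point is the bookkeeping at the boundary indices. One must make sure the convention $\binom{m}{j}=0$ and the degenerate formulas patch together correctly, and that no spurious constant term appears. I expect this to be routine once the cases $r=1$, $r=n-1$ and the generic case are handled separately as above.
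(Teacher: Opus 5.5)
Your proof is correct. The paper gives no proof of this lemma: it quotes the formula from Merino, Ram\'{i}rez-Ib\'{a}\~{n}ez and Rodr\'{i}guez-S\'{a}nchez. So your deletion--contraction induction supplies an argument the paper does not contain, rather than repeating one.

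What your route buys is that it uses only tools the paper itself sets up:
the recurrence that serves as the definition of $T_M$;
the identifications $U_{r,n}\setminus e\cong U_{r,n-1}$ and $U_{r,n}/e\cong U_{r-1,n-1}$;
and Pascal's rule under the convention $\binom{m}{j}=0$ unless $0\le j\le m$.

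You have also found the one real subtlety. Under that convention, $\binom{m}{j-1}+\binom{m}{j}=\binom{m+1}{j}$ holds for every $m\ge 0$. It fails only at $m=-1$, $j=0$, where the left side is $0$ but $\binom{0}{0}=1$. That failure lands exactly on the coefficient of $x^{n-1}$ when $r=n-1$, and on the coefficient of $y^{n-1}$ when $r=1$. This is why those two cases must be completed by the degenerate polynomials $x^{n-1}$ and $y^{n-1}$, as you do. The price of your route is this boundary bookkeeping.

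A non-inductive alternative is Tutte's activity expansion $T_M=\sum_B x^{i(B)}y^{e(B)}$. Order $E=\{1,\dots,n\}$. For a basis $B$, the fundamental circuit of $f\notin B$ is $B\cup f$, and the fundamental cocircuit of $b\in B$ is $(E\setminus B)\cup b$.
\begin{itemize}
\item A basis containing $1$ has external activity $0$. Its internal activity is $i$ exactly when $\{1,\dots,i\}\subseteq B$ and $i+1\notin B$, so there are $\binom{n-i-1}{r-i}=\binom{n-i-1}{n-r-1}$ such bases.
\item A basis avoiding $1$ has internal activity $0$. Its external activity is $\min B-1$, so there are $\binom{n-j-1}{r-1}$ bases with external activity $j$.
\end{itemize}
This reads off every coefficient at once with no boundary cases. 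However, it relies on the activity theorem, which the paper never introduces, whereas your argument stays entirely within the paper's own definitions.
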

		
		Note that for any matroid $M$ of rank $r$ on $n$ elements, we have $\mathcal{B}(M)\subseteq \mathcal{B}(U_{r,n})$. Hence,  from Lemmas \ref{lem1.1} and \ref{lem1.2}, we obtain the following result.
		
		\begin{thm}\label{thm1.3}
			Let $ M $ be a matroid of rank $r$ on $n$ elements ($n\geq 1$ and $0\le r\le n$). Then, for any $t$ with $0\leq t\leq 1$ and any non-negative integer
			$k$, we have
			$$ [x^{k}]T_{M}(x,t)\le \binom{n-k-1}{r-k}+\sigma ^{'}, $$
			where $\sigma ^{'}=1$ if $k=r=n$, and $\sigma ^{'}=0$ otherwise.
		\end{thm}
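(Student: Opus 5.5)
Since every basis of $M$ is an $r$-subset of the ground set, $\mathcal{B}(M)\subseteq\mathcal{B}(U_{r,n})$. Lemma \ref{lem1.1} with $M_1=M$ and $M_2=U_{r,n}$ then gives $[x^k]T_M(x,t)\le [x^k]T_{U_{r,n}}(x,t)$ for every $0\le t\le 1$ and every $k\ge 0$. So it suffices to compute $[x^k]T_{U_{r,n}}(x,t)$ from Lemma \ref{lem1.2} and check that it is at most $\binom{n-k-1}{r-k}+\sigma'$. We split into the cases $0<r<n$, $r=n$ and $r=0$.

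\emph{Case $0<r<n$.} Here
$$T_{U_{r,n}}(x,t)=\sum_{i=1}^{r}\binom{n-i-1}{n-r-1}x^i+\sum_{j=1}^{n-r}\binom{n-j-1}{r-1}t^j.$$
For $1\le k\le r$ the coefficient of $x^k$ is $\binom{n-k-1}{n-r-1}=\binom{n-k-1}{r-k}$, by symmetry of binomial coefficients; this is valid since $n-k-1\ge n-r-1\ge 0$. For $k>r$ the coefficient is $0$, which agrees with $\binom{n-k-1}{r-k}=0$ under the paper's convention. For $k=0$ the coefficient is the constant $\sum_{j=1}^{n-r}\binom{n-j-1}{r-1}t^j$. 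Since $0\le t\le 1$, this is at most $\sum_{j=1}^{n-r}\binom{n-j-1}{r-1}$. Substituting $m=n-j-1$, which runs from $r-1$ to $n-2$, the hockey-stick identity gives $\binom{n-1}{r}=\binom{n-0-1}{r-0}$, as required.

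\emph{Case $r=n$.} Here $T=x^n$, so the only nonzero coefficient is $[x^n]T=1$. The target bound is $\binom{n-k-1}{n-k}+\sigma'$. For $k=n$ this is $\binom{-1}{0}+1=0+1=1$, using the convention that $\binom{m}{j}=0$ unless $0\le j\le m$; this is exactly the reason for the correction term $\sigma'$. For $k<n$ the bound is $\binom{n-k-1}{n-k}+0=0$ and the coefficient is $0$; for $k>n$ both are $0$.

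\emph{Case $r=0$.} Here $T=y^n$, so $T(x,t)=t^n$. For $k=0$ the coefficient is $t^n\le 1=\binom{n-1}{0}$, using $n\ge1$. For $k>0$ the coefficient is $0$ and the bound is nonnegative.

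The only step that needs any care is the $k=0$ term when $0<r<n$, where the $t$-part contributes and one needs $t^j\le 1$ together with the hockey-stick summation. Beyond that, the work is bookkeeping at the boundary cases, making sure the convention $\binom{m}{j}=0$ yields the stated formula there.
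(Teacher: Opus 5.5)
Your proof is correct and follows the paper's argument: compare with $U_{r,n}$ via Lemma~\ref{lem1.1}, read off the coefficients from Lemma~\ref{lem1.2}, and bound the constant term using $t^j\le 1$ together with $\sum_{j=1}^{n-r}\binom{n-j-1}{r-1}=\binom{n-1}{r}$. Your explicit checks of the boundary cases $r=n$ (where $\sigma'$ compensates for $\binom{-1}{0}=0$) and $r=0$ fill in what the paper dismisses as easy to see.
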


		\begin{proof}
			It is easy to see that the result holds for $r=n$ or $r=0$. Suppose that $0<r<n$. Since $\mathcal{B}(M)\subseteq \mathcal{B}(U_{r,n})$, for $1\le k\le r$, $[x^k]T_{U_{r,n}}(x,t)=\binom{n-k-1}{r-k}$; for $k=0$,  $[x^0]T_{U_{r,n}}(x,t) =\sum_{j=1}^{n-r}\binom{n-j-1}{r-1}t^j
			\le\sum_{j=1}^{n-r}\binom{n-j-1}{r-1}
			=\binom{n-1}{r}$; for $k>r$, both sides are $0$.
		\end{proof}
		
		The \textit{characteristic polynomial} $\chi_M(\lambda)$ of a matroid $M$ of rank $r$, a specialization of the Tutte polynomial, is defined as
		\begin{align*}
			\chi_M(\lambda)=(-1)^{r}T_M(1-\lambda,0).
		\end{align*}
		
		Let $G=(V,E)$ be a connected graph. For any $E'\subseteq E$, let $G[E']$ denote the spanning subgraph with edge set $E'$, that is, $G[E']=(V,E')$. Define $r_{M(G)}(E')=|V|-c(G[E'])$, where $c(G[E'])$ is the number of connected components of $G[E']$. Clearly, $r$ satisfies the three conditions of the rank function of a matroid, which is called the \emph{cycle matroid} of $G$, denoted by $M(G)$. The \textit{characteristic polynomial} of the graphic matroid $M(G)$ is related to the chromatic polynomial $P_{G}(\lambda)$ of a graph $G$ by the equation $P_{G}(\lambda)=\lambda ^{\omega(G)}\chi_{M(G)}(\lambda),$ where $\omega(G)$ is the number of connected components of $G$. 
		In 1978, Li and Tian \cite{Li} obtained an upper bound on the coefficients of the chromatic polynomial of a graph in terms of its girth $g$ and the number of cycles of length $g$. A matroid $M'$ is a \emph{minor} of a matroid $M$ if $M'=(M\setminus A)/B=(M/B)\setminus A$ for two disjoint subsets $A$ and $B$ of the ground set of $M$. A matroid $M$ is called \emph{binary} if it does not contain $U_{2,4}$ as a minor. It is well-known that the graphic matroid of any graph is binary. In 2012, Delaplane and Wu \cite{Delaplane} stated the following bound for the characteristic polynomial of simple binary matroids. Under the binomial convention adopted here, this bound requires a boundary correction, as discussed after Theorem \ref{3.3}.
		\begin{thm}\label{thm1.4}\cite{Delaplane}
			Let $M$ be a simple binary matroid of rank $r$ on $n$ elements with girth $g$. Let $c_{g}$ denote the number of circuits of size $g$. Then for any non-negative integer $k$,
			$$ (-1)^{r-k}[x^{k}]\chi_{M}(x)\le \binom{n}{r-k}-\binom{n-g+2}{r-k-g+2}+\binom{n-c_{g}-g+2}{r-k-g+2}+\sigma , $$ where $\sigma =1$ if $n\le r+c_g$ and $k=r-g+1$, otherwise $\sigma =0$.
		\end{thm}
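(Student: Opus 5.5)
The plan is to use Whitney's broken circuit theorem. Fix a linear order on $E$, and call $C\setminus\{\min C\}$ a \emph{broken circuit} for each circuit $C$. Whitney's theorem says that $(-1)^{r-k}[x^{k}]\chi_M(x)$ equals the number of $(r-k)$-subsets of $E$ containing no broken circuit. Write $m=r-k$. Then
$$(-1)^{m}[x^{k}]\chi_M(x)=\binom{n}{m}-N,$$
where $N$ is the number of $m$-subsets containing at least one broken circuit. So it suffices to prove the lower bound
$$N\ \ge\ \binom{n-g+2}{m-g+2}-\binom{n-c_g-g+2}{m-g+2}-\sigma .$$
I will only use broken circuits coming from circuits of size $g$. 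Each of these is a $(g-1)$-set, and this holds for any choice of ordering.

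\textbf{Step 1: the $c_g$ broken circuits of size $g-1$ are pairwise distinct.} Suppose two distinct $g$-circuits $C_1,C_2$ gave the same broken circuit $B$. Then $C_1=B\cup\{a\}$ and $C_2=B\cup\{b\}$ with $a\ne b$. Since $M$ is binary, the symmetric difference $C_1\triangle C_2=\{a,b\}$ is a disjoint union of circuits. It would therefore contain a circuit of size at most $2$, which contradicts simplicity. Hence we get a family $\mathcal F$ of exactly $c_g$ distinct $(g-1)$-subsets of an $n$-set, and $N$ is at least the number of $m$-sets containing some member of $\mathcal F$.

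\textbf{Step 2: a combinatorial extremal lemma.} This is the main obstacle. The claim is: for any family $\mathcal F$ of $c$ distinct $s$-subsets of an $n$-set (here $s=g-1$), the number of $m$-supersets of members of $\mathcal F$ is at least
$$\binom{n-s+1}{m-s+1}-\binom{n-s+1-c}{m-s+1}.$$
This bound is attained by the ``sunflower'' family of sets $S\cup\{f_i\}$ with a common core $|S|=s-1$: the count is then exactly the $m$-sets containing $S$ and meeting $\{f_1,\dots,f_c\}$. My first attempt will be induction on $n$ (and on $c$) by splitting on an element $x$. Let $\mathcal F_x$ be the members containing $x$ and $\mathcal F_{\bar x}$ the rest. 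The $m$-supersets avoiding $x$ are bounded by induction applied to $\mathcal F_{\bar x}$ on $n-1$ points. The supersets containing $x$ are bounded via the link family $\{F\setminus x\}\cup\mathcal F_{\bar x}$, a family of sets of sizes $s-1$ and $s$, using $m-1$ on $n-1$ points. If the mixed-size induction becomes messy, the fallback is to apply compression/shifting: compressions do not increase the up-set count, so one reduces to a shifted family and compares with the sunflower directly. A Kruskal--Katona-type argument on the complements is another option, since $m$-supersets of $\mathcal F$ correspond to $(n-m)$-subsets avoiding members of $\mathcal F$.

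\textbf{Step 3: boundary cases.} Under the convention $\binom{a}{b}=0$ unless $0\le b\le a$, the identity $\sum$-form of the sunflower count can fail when $n-c_g-g+2<m-g+2$, i.e.\ roughly $n\le r+c_g$ in the relevant range. The critical case is $m=g-1$, i.e.\ $k=r-g+1$. There the sunflower count should be $c_g$. The formula instead gives $\binom{n-g+2}{1}-\binom{n-c_g-g+2}{1}$, which equals $c_g$ only when $n-c_g-g+2\ge 0$; when this is negative, $\binom{n-c_g-g+2}{1}=0$ and the formula evaluates to $n-g+2$ instead of $c_g$. I will treat $m=g-1$ directly, noting that the count there is exactly $c_g$ since each $(g-1)$-set contains at most itself. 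I will then check that the extra $\sigma=1$ absorbs the discrepancy in the case $n\le r+c_g$, $k=r-g+1$. I would also verify that for $m<g-1$ both sides reduce to $\binom{n}{m}$, and that no other boundary term misbehaves.
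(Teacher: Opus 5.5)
\textbf{There is a genuine gap, and it cannot be closed.} Under the paper's convention that $\binom{m}{j}=0$ unless $0\le j\le m$, the inequality as printed is false. It fails exactly where your Step 3 asserts that nothing can go wrong.

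Take $k=r-g+2$, i.e.\ $m=g-2$. Every broken circuit has at least $g-1$ elements, so $N=0$ and the left side equals $\binom{n}{g-2}$. But $\sigma=0$ at this $k$, so the right side is $\binom{n}{g-2}-\binom{n-g+2}{0}+\binom{n-c_g-g+2}{0}$. This equals $\binom{n}{g-2}-1$ whenever $n<c_g+g-2$. So your claim that for $m<g-1$ both sides reduce to $\binom{n}{m}$ is wrong at $m=g-2$. Equivalently, your Step 2 lemma, stated without the restriction $m\ge s$, predicts $\binom{n-s+1}{0}-\binom{n-s+1-c}{0}=1$ superset at level $m=s-1$ when $c>n-s+1$, whereas there are none.

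Concretely, $M(K_5)$ is simple and binary with $n=10$, $r=4$, $g=3$, $c_3=10$. At $k=3$ the left side is $10$ while the right side is $10-1+0=9$. The paper's own Example with $K_{2,5}$ at $k=r-g+2=4$ is the same phenomenon, since the printed $\sigma$ is switched on only at $k=r-g+1$. The correction belongs at $k=r-g+2$, as in Corollary \ref{x-0} and in the value of $\psi$ at $k=r-g+2$ in Theorem \ref{3.3}.

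Your worry about $m=g-1$ is also misplaced. If $n-c_g-g+2<0$ then $c_g>n-g+2$, so the required bound $N\ge n-g+2$ is weaker than the true value $N=c_g$. No $\sigma$ is needed at $k=r-g+1$.

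Suppose you restate the result with $\sigma=1$ exactly when $k=r-g+2$ and $n<c_g+g-2$ (which forces $n\le r+c_g$, since $r\ge g-1$). Then your argument does prove it. Step 1 is correct. Step 2 for $m\ge s$ follows from Kruskal--Katona applied to complements: the complements of the sunflower $\{S\cup\{f_i\}\}$ form, up to relabelling, the first $c$ sets in colex order. For $c>n-s+1$, use that the minimum up-set size is nondecreasing in $c$. Commit to that argument rather than listing alternatives.

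For comparison, the paper does not prove Theorem \ref{thm1.4} at all. Its substitute, Theorem \ref{3.3}, comes from deletion--contraction bounds on $[x^k]T_M(x,0)$ (induction on $g$ and then on $c_g$), pushed through the binomial transform of Lemma \ref{lem 2.7}. Your broken-circuit route is more direct. Its Step 1 only needs that no two $g$-circuits share $g-1$ elements. That holds in every simple $U_{g-1,g+1}$-free matroid, because the union of two such circuits would be a $(g+1)$-set of rank $g-1$ with all $(g-1)$-subsets independent. The corrected bound it yields is never weaker than Theorem \ref{3.3}.
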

		
		\textbf{Remark.} There is no correction item $\sigma $ in \cite{Delaplane} for Theorem \ref{thm1.4}. However, correction items are required. Their mistake mainly arose from taking $\binom{m}{0}=0$ instead of $1$ when $m<0$.\begin{exm}
			Let $G=K_{2,5}$ and let $M=M(G)$ be its cycle matroid. Then $n=10$, $r=6$, $g=4$, $c_4=10$, and $[x^4]\chi_M(x)=45$. Under the convention adopted in this paper, namely $\binom{m}{j}=0$ unless $0\le j\le m$, the upper bound in \cite{Delaplane} at $k=r-g+2=4$ evaluates to
			\[
			\binom{10}{2}-\binom{8}{0}+\binom{-2}{0}
			=45-1+0
			=44
			<45=[x^4]\chi_M(x).
			\]
		\end{exm}
				
		They \cite{Delaplane} also showed that this bound fails for $U_{2,n}$ when $n\geq 5$. This motivates the following question.
		\begin{que}\label{non-binary}
			Does the bound in Theorem \ref{thm1.4} hold for other non-binary matroids?
		\end{que}
			
		For a matroid $N$, a matroid $M$ is said to be \emph{$N$-free} if, after deleting any $\lvert E(M)\rvert - \lvert E(N)\rvert$ elements from $M$, the resulting matroid is not isomorphic to $N$.
		In this paper, for any real number $t$ with $0\leq t\leq 1$,  we establish an upper bound on the coefficients of the polynomials $T_{M}(x,t)$ and $T_{M}(t,y)$.

		\begin{thm}\label{thm1.6}
			Let $M$ be a matroid of rank $r$ on $n$ elements with $1\le r\le n-1$ and girth $g$. Let $c_{g}$ denote the number of circuits of size $g$. Let $h$ be the maximum cardinality of the hyperplanes of $M$, and let $H_{h}$ be the number of hyperplanes of size $h$. For any non-negative integer $k$ and any real number $t$ with $0\leq t\leq 1$,
			\begin{enumerate}
				\item [(i)] if $M$ is simple and $U_{g-1,g+1}$-free, then	\begin{flalign*}
					&
					&	[x^k]T_M(x,t)
					\leq&
					\binom{n-k-1}{r-k}
					-
					\binom{n-k-g+1}{r-k-g+2}
					+
					\binom{n-c_g-k-g+1}{r-k-g+2}
					&&\\
					&&
					&+t
					\left[
					\binom{n-k-g}{r-k-g+2}
					-
					\binom{n-c_g-k-g}{r-k-g+2}
					\right]
					+\sigma,
					&&
				\end{flalign*}
				where
				\[
				\sigma = 
				\begin{cases}
					t, & \text{if } n \leq r + c_g \text{ and } k = r - g + 1, \\
					1, & \text{if } n \leq r + c_g \text{ and } k = r - g + 2, \\
					0, & \text{otherwise}.
				\end{cases}
				\]
				\item [(ii)]  if \(h\le n-3\) and \(M\) has no contraction isomorphic to \(U_{2,n-h+1}\), then	
				\begin{flalign*}
					&&
					\begin{aligned}[t]
						[y^k]T_M(t,y)
						\leq&
						\binom{n-k-1}{r-1}
						-
						\binom{h-k+1}{r-1}
						+
						\binom{h-k-H_h+1}{r-H_h-1}
						\\
						&+t
						\left[
						\binom{h-k}{r-2}
						-
						\binom{h-k-H_h}{r-H_h-2}
						\right]
						+\theta,
					\end{aligned}
					&&
				\end{flalign*}
				where
				\[
				\theta = 
				\begin{cases}
					t, & \text{if } r \le H_h \text{ and } k = h - r + 1, \\
					1, & \text{if } r \le H_h \text{ and } k = h - r + 2, \\
					0, & \text{otherwise}.
				\end{cases}
				\]
			\end{enumerate}
		\end{thm}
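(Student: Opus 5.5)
The plan is to apply Lemma~\ref{lem1.1}: we construct a matroid $M'$ on $E(M)$ with $\mathcal{B}(M)\subseteq\mathcal{B}(M')$ whose Tutte polynomial we can compute, in the same way that Theorem~\ref{thm1.3} used $U_{r,n}$. Part (ii) will then follow from part (i) by duality.

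\textbf{Part (i).} Let $C_1,\dots,C_{c_g}$ be the circuits of size $g$.

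First we use the hypothesis that $M$ is $U_{g-1,g+1}$-free. Suppose $|C_i\cap C_j|=g-1$. Then $C_i\cup C_j$ has $g+1$ elements and rank $g-1$. Since $M$ has girth $g$, every $(g-1)$-subset of $C_i\cup C_j$ is independent, so $M|(C_i\cup C_j)\cong U_{g-1,g+1}$, a contradiction. Hence $|C_i\cap C_j|\le g-2$ for all $i\neq j$.

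Next define $M'$ to be the rank-$r$ matroid in which the $C_i$ are dependent $(g-1)$-rank sets (i.e. circuit-flats of size $g$), every other set of size at most $r$ that contains no $C_i$ is independent, and everything is truncated at rank $r$. Equivalently, $\mathcal{B}(M')$ is the set of $r$-subsets containing no $C_i$. To verify that $M'$ is a matroid, check the circuit axioms for the family $\{C_i\}\cup\{(r+1)\text{-sets containing no }C_i\}$. The only nontrivial elimination is between $C_i$ and $C_j$. It works because $|(C_i\cup C_j)\setminus e|\ge g+1$, and such a set either already contains some $C_l$ or has more than $r$ elements, which is where the intersection bound is used. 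Every basis of $M$ contains no circuit of $M$, so $\mathcal{B}(M)\subseteq\mathcal{B}(M')$, and Lemma~\ref{lem1.1} gives $[x^k]T_M(x,t)\le [x^k]T_{M'}(x,t)$.

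It remains to compute $[x^k]T_{M'}(x,t)$. I would not expand over all subsets. Instead I would remove the circuits one at a time: let $M_j$ be the matroid obtained from $U_{r,n}$ by imposing only $C_1,\dots,C_j$. I would then show
\begin{align*}
[x^k]T_{M_{j-1}}(x,t)-[x^k]T_{M_j}(x,t)=\binom{n-j-k-g+1}{r-k-g+1}-t\binom{n-j-k-g}{r-k-g+1},
\end{align*}
up to a boundary correction. This can be done either through the basis-activity expansion $T=\sum_B x^{i(B)}y^{e(B)}$, ordering the elements so that each new $C_j$ contains a fresh largest element, or by deletion--contraction on such an element. The effect is that the $r$-sets through $C_j$ lose their basis status, which shifts internal and external activities in a controlled way. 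Summing over $j$ with the hockey-stick identity
\begin{align*}
\sum_{j=1}^{c_g}\binom{n-j-m}{s-1}=\binom{n-m}{s}-\binom{n-c_g-m}{s}
\end{align*}
produces exactly the bracketed terms of the statement, starting from $\binom{n-k-1}{r-k}$ supplied by Lemma~\ref{lem1.2}.

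\textbf{Main obstacle.} The hardest step is making this telescoping computation exact. One difficulty is handling sets of size at most $r$ that contain two or more of the $C_i$, which occur when $r\ge g+2$; the ordering of elements must make each basis counted in the activity expansion be affected by at most one new circuit. The other is the binomial boundary terms. When $n\le r+c_g$, the lower binomial $\binom{n-c_g-k-g+1}{r-k-g+2}$ becomes $0$ at $k=r-g+2$ where a naive formula would give $1$, and similarly with a factor $t$ at $k=r-g+1$. These are precisely the corrections $\sigma$, so I would verify these two values of $k$ separately by direct count.

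\textbf{Part (ii).} Use $T_M(t,y)=T_{M^*}(y,t)$ and apply (i) to $M^*$, which has rank $n-r$. Hyperplanes of $M$ are complements of cocircuits, so $g(M^*)=n-h$ and $c_{g}(M^*)=H_h$.
\begin{itemize}
\item The hypothesis $h\le n-3$ says $M^*$ is simple.
\item $M$ has no contraction isomorphic to $U_{2,n-h+1}$ if and only if $M^*$ has no deletion isomorphic to $U_{n-h-1,n-h+1}=U_{g^*-1,g^*+1}$, i.e.\ $M^*$ is $U_{g^*-1,g^*+1}$-free.
\end{itemize}
Substituting $r\mapsto n-r$ and $g\mapsto n-h$ turns each binomial of (i) into the corresponding one of (ii) by symmetry. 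For example, $\binom{n-k-1}{n-r-k}=\binom{n-k-1}{r-1}$. The condition $n\le (n-r)+H_h$ becomes $r\le H_h$, so $\sigma$ becomes $\theta$.
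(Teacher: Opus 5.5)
Your reduction of (ii) to (i) by duality is correct and is exactly what the paper does. Part (i), however, has a genuine gap: the matroid $M'$ does not exist in general, and your elimination check is where it breaks. The assertion that $(C_i\cup C_j)\setminus e$ ``either contains some $C_l$ or has more than $r$ elements'' does not follow from $|C_i\cap C_j|\le g-2$. In fact it must fail in general. If two distinct $g$-sets $C_i,C_j$ with $C_i\cap C_j\neq\emptyset$ are circuits of any matroid, submodularity gives $rk(C_i\cup C_j)\le 2(g-1)-|C_i\cap C_j|=|C_i\cup C_j|-2$, so $(C_i\cup C_j)\setminus e$ is dependent. Your $M'$ declares that set independent as soon as $|C_i\cup C_j|\le r+1$ and it contains no third $C_l$. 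Concretely, let $G$ be $K_4$ minus an edge, with triangles $\{a,b,c\}$ and $\{a,p,q\}$, plus a pendant edge $f$. Then $M(G)$ is simple and binary with $n=6$, $r=4$, $g=3$, $c_3=2$. In your $M'$, $\{b,c,p,q\}$ is a basis and $\{a,b,p\}\subseteq\{a,b,p,f\}$ is independent. Yet both $\{a,b,p,c\}$ and $\{a,b,p,q\}$ contain a triangle, so augmentation fails and Lemma~\ref{lem1.1} has nothing to act on.

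Even where $M'$ is a matroid, the exact telescoping is wrong: the right-hand side is only an upper bound, not the value of $[x^k]T_{M'}(x,t)$. Take $M=M(K_4)$, where $r=g=3$ and the four triangles are circuit-hyperplanes, so $M'=M$. Passing from $M_{j-1}$ to $M_j$ subtracts $x+y-xy$ from the Tutte polynomial (the relaxation formula). Hence $[x^1]$ drops by exactly $1-t$ at every step, and $[x^1]T_{M'}(x,t)=2+4t$. Your decrements are $1-t,1-t,1,0$, and with $\sigma=t$ they give $3+3t$.

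The missing idea is an inductive inequality rather than an exact count. The paper applies deletion--contraction at an element $e$ of a $g$-circuit (chosen with $g(M\setminus e)=g$ when $c_g\ge 2$) and inducts on $g$ and $c_g$. Lemmas~\ref{lem2.2}, \ref{lem 2.3} and \ref{lem 2.4} give $g(M/e)=g-1$, $c_{g-1}(M/e)=d_g(e)$, and inheritance of the freeness hypothesis. Lemma~\ref{lem2.1} (and Lemma~\ref{combinatorial-identity2} at $k=r-g+1$) shows the unknown split $c_g=d_g(e)+(c_g-d_g(e))$ is worst at $d_g(e)=1$. The range $k\ge r-g+2$ and the case $r=n-1$ are handled by Theorem~\ref{thm1.3} and Lemma~\ref{unique-circuit}. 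The base case is $g=3$ at $t=0$, where each contraction of a parallel partner in $M/e$ creates a loop and so contributes $0$ at $y=0$. This is then combined, via convexity in $t$, with the $t=1$ bound of Theorem~\ref{thm1.7}.
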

		
		Theorem \ref{thm1.6} gives upper bounds complementary to the uniform-matroid bound in Theorem \ref{thm1.3}. Moreover, if $M$ is a simple binary matroid, then $M$ must be $U_{g-1,g+1}$-free. Theorem \ref{3.3} extends the bound in Theorem \ref{thm1.4} to simple $U_{g-1,g+1}$-free matroids satisfying $n\ge r+c_g$ and answers Question \ref{non-binary} partly.
		
		 Theorem \ref{thm1.6} is not necessarily true for other matroids.    Remarkably, this result holds for all matroids when $t=1$.
		
		\begin{thm}\label{thm1.7}
			Let $M$ be a matroid of rank $r$ on $n$ elements with
			$1\le r\le n-1$ and girth $g$. Let $ c_{g} $ denote the number of circuits of size $ g $. Let $h$ be the maximum cardinality of the hyperplanes of $M$, and let $H_{h}$ be the number of hyperplanes of size $h$. Then, for any non-negative integer $ k $,
			\begin{flalign*}
				&(i)\qquad
				&&
				[x^k]T_M(x,1)
				\leq
				\binom{n-k-1}{r-k}
				-
				\binom{n-k-g}{r-k-g+1}
				+
				\binom{n-c_g-k-g}{r-k-g+1}
				+\sigma,
				&&
			\end{flalign*}
		where $\sigma=1$ if $n\le r+c_g$ and $k\in\{r-g+1,r-g+2\}$, and $\sigma=0$ otherwise.
			\begin{flalign*}
				&(ii)\qquad
				&&
				[y^k]T_M(1,y)
				\leq
				\binom{n-k-1}{r-1}
				-
				\binom{h-k}{r-1}
				+
				\binom{h-k-H_h}{r-H_h-1}
				+\theta,
				&&
			\end{flalign*}
			where $\theta=1$ if $r\le H_h$ and $k\in\{h-r+1,h-r+2\}$, and $\theta=0$ otherwise.
			
		\end{thm}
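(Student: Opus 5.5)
The plan is to use the interpretation of $T_M(x,1)$ through basis activities rather than the comparison Lemma \ref{lem1.1} alone. Since $T_M(x,y)=\sum_{B\in\mathcal{B}(M)}x^{i(B)}y^{e(B)}$ for any fixed linear order of $E$, we have $[x^k]T_M(x,1)=\#\{B\in\mathcal{B}(M): i(B)=k\}$. Setting $y=1$ removes all dependence on external activity, and this is why no exclusion hypothesis is needed for Theorem \ref{thm1.7}.

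\textbf{Step 1 (baseline count).} For $U_{r,n}$ with the order $e_1<\dots<e_n$, a basis $B$ has internal activity $k<r$ exactly when $\{e_1,\dots,e_k\}\subseteq B$ and $e_{k+1}\notin B$. This gives the count $\binom{n-k-1}{r-k}$ of Theorem \ref{thm1.3}.

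\textbf{Step 2 (choice of order for $M$).} Fix a $g$-circuit $C$ of $M$, and order $E$ so that the elements of $C$ come right after the first $k$ positions, or, better, so that the $(g-1)$ smallest elements outside a suitable prefix lie in $C$. The aim is that every basis of $M$ with internal activity $k$ maps injectively to an $(r-k)$-subset of the $(n-k-1)$ remaining elements.

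\textbf{Step 3 (excluded sets).} I will show that the image of this injection misses every set that contains the fixed $(g-1)$-set $C\setminus\{e\}$ together with one of the $c_g$ elements that close a $g$-circuit with it. The number of $(r-k)$-sets containing the $(g-1)$-set is $\binom{n-k-g}{r-k-g+1}$, after accounting for the removed element $e_{k+1}$. The number of those that avoid all $c_g$ closing elements is $\binom{n-c_g-k-g}{r-k-g+1}$. Subtracting the difference from $\binom{n-k-1}{r-k}$ gives exactly the main term in (i). When $n\le r+c_g$, the binomial convention sends the last term to $0$ even though the empty avoidance set should count $1$. This happens precisely at $k\in\{r-g+1,r-g+2\}$, and it produces the correction $\sigma$.

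\textbf{Step 4 (hyperplane bound).} Part (ii) follows by duality. We have $T_M(1,y)=T_{M^*}(y,1)$, hyperplanes of $M$ of size $h$ are complements of cocircuits of $M^*$ of size $n-h$, and $\mathrm{rk}(M^*)=n-r$. Substituting these into (i) turns its binomials into those of (ii), after using $\binom{a}{b}=\binom{a}{a-b}$. I will check the index translation $g\mapsto n-h$, $c_g\mapsto H_h$, $r\mapsto n-r$ carefully, including how $\sigma$ becomes $\theta$.

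\textbf{Main obstacle.} The difficulty is Step 3. In a general matroid, the $c_g$ circuits of size $g$ need not share a common $(g-1)$-set, so the naive excluded family does not exist. What I would try first is a compression argument. Repeatedly modify the matroid by an operation that never decreases the number of bases of internal activity $k$ and does not increase $c_g$, such as relaxations and order-compatible exchanges controlled by Lemma \ref{lem1.1} at $t=1$. The goal is to reach a configuration where all minimum circuits contain a common $(g-1)$-set. At $t=1$, monotonicity only needs basis containment, so this reduction should be available for all matroids. If compression fails, the fallback is induction on $n$ via $T_M(x,1)=T_{M\setminus e}(x,1)+T_{M/e}(x,1)$ for an element $e$ of a minimum circuit. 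This bounds $M/e$ by Theorem \ref{thm1.3} and $M\setminus e$ by induction with $c_g$ reduced, and then recombines the terms using Pascal's rule.
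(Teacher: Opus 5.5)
Your Step 4 is fine. Using $[y^k]T_M(1,y)=[x^k]T_{M^*}(x,1)$ with $g(M^*)=n-h$, $c_{n-h}(M^*)=H_h$ and $\mathrm{rk}(M^*)=n-r$ turns (i) into (ii), including $\sigma\mapsto\theta$, exactly as in the paper. The gap is that you have no working argument for (i).

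\textbf{Steps 2--3 and the compression.} Steps 2--3 never define the injection. For a general matroid the internally active elements of a basis need not form an initial segment of the order, so ``an $(r-k)$-subset of the remaining $n-k-1$ elements'' has no meaning. The excluded family also needs all $g$-circuits to contain one common $(g-1)$-set $S$. For $g\ge 2$ and $c_g\ge 2$ no matroid has this: if $S\cup\{a\}$ and $S\cup\{b\}$ are $g$-circuits, circuit elimination at $s\in S$ shows that the $g$-set $(S\setminus\{s\})\cup\{a,b\}$ is a $g$-circuit not containing $S$. So the compression has no target.

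It also runs in the wrong direction. Lemma \ref{lem1.1} only lets you pass to $M'$ with $\mathcal B(M)\subseteq\mathcal B(M')$. Then every circuit of $M'$ contains a circuit of $M$, so if the girth stays $g$, the $g$-circuits of $M'$ are among those of $M$ and $c_g(M')\le c_g(M)$. Since the right-hand side of (i) is nonincreasing in $c_g$, a bound for $M'$ is weaker than the one you need for $M$. You would need $c_g$ not to decrease, not ``not to increase''.

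\textbf{The fallback.} The fallback fails too, because it spends Theorem \ref{thm1.3} on the wrong minor. Put $s=r-k-g+1$ and $d=d_g(e)$. Add $\binom{n-k-2}{r-k-1}$ for $M/e$ to the inductive bound for $M\setminus e$, which has $c_g-d$ circuits of size $g$. The sum exceeds the target by
\[
\binom{n-k-g-1}{r-k-g}+\left[\binom{n-c_g+d-k-g-1}{s}-\binom{n-c_g-k-g}{s}\right]\ge\binom{n-k-g-1}{r-k-g}>0
\]
for every $k\le r-g$. For example, for $M(K_4)$ and $k=0$ you get $6+2=8$ against the target $7$. When $c_g=1$, $M\setminus e$ has no $g$-circuit at all, so it cannot supply the saving.

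\textbf{The missing idea.} The saving comes from the contraction. By Lemma \ref{lem2.2}, $M/e$ has girth $g-1$ and exactly $d$ circuits of size $g-1$. So the paper inducts on the girth as well as on $c_g$.

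The base case is $g=1$, where $T_M(x,1)=T_{M\setminus L}(x,1)$ for the set $L$ of loops, and Theorem \ref{thm1.3} gives $\binom{n-c_1-k-1}{r-k}$. This base case is why no exclusion hypothesis is needed at $t=1$.

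For $c_g\ge 2$ the paper picks $e\in C_1\setminus C_2$, so that $M\setminus e$ keeps girth $g$ with $c_g-d$ circuits of size $g$. It then applies the inductive hypothesis to both minors. Pascal's rule merges the leading terms, and the rest is Lemma \ref{lem2.1} with $m_1=n-k-g-1$, $m_2=n-c_g-k-g$ and $x=d-1$:
\[
\binom{n-k-g-d}{s}+\binom{n-c_g-k-g+d-1}{s}\le\binom{n-k-g-1}{s}+\binom{n-c_g-k-g}{s}.
\]
In the $K_4$ example this lowers the bound for $M/e$ from $6$ to $4$, giving $6\le 7$.

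The ranges $k\ge r-g+2$ (via Theorem \ref{thm1.3}) and $r=n-1$ (via Lemma \ref{unique-circuit}) are handled separately.
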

		
		\section{Proof of Theorems \ref{thm1.6} and \ref{thm1.7}}
		
		This section is devoted to proving Theorems \ref{thm1.6} and \ref{thm1.7}. The following elementary lemmas will be needed. We begin with a binomial-coefficient inequality, which can be readily verified using properties of binomial coefficients.
		\begin{lem}\label{lem2.1}\cite{Li}
			For non-negative integers $m_{1}$, $m_{2}$, $x$, $t$ with $m_{1}\geq m_{2}$ and $x\leq m_{1}-m_{2}$, we have
			\begin{equation}\label{combinatorial-identity}
				\binom{m_{1}-x}{t} + \binom{m_{2}+x}{t} \le \binom{m_{1}}{t} + \binom{m_{2}}{t}.
			\end{equation}	
			Moreover, Inequality (\ref{combinatorial-identity}) also holds for integers $m_1,m_2,x,t$ satisfying $m_1\ge0$, $m_2<0$, $0\le x\le m_1-m_2$,
			and $t\ge1$.
		\end{lem}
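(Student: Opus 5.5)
We need to prove Lemma 2.1: for $m_1\ge m_2\ge 0$, $0\le x\le m_1-m_2$ (and implicitly $x\ge0$),
\[
\binom{m_1-x}{t}+\binom{m_2+x}{t}\le\binom{m_1}{t}+\binom{m_2}{t},
\]
together with the extension to $m_2<0$ and $t\ge1$.

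The approach is to reduce to the case of a unit shift and then telescope. Set $f(s)=\binom{s}{t}$ for integers $s$, with the paper's convention that $f(s)=0$ unless $0\le t\le s$. The inequality is equivalent to
\[
f(m_1)-f(m_1-x)\;\ge\; f(m_2+x)-f(m_2).
\]
Write both sides as telescoping sums of first differences,
\[
\sum_{i=1}^{x}\Delta(m_1-x+i-1)\quad\text{versus}\quad\sum_{i=1}^{x}\Delta(m_2+i-1),
\qquad \Delta(s)=f(s+1)-f(s).
\]
Pairing the $i$-th terms, it suffices to show $\Delta(m_1-x+i-1)\ge \Delta(m_2+i-1)$. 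The hypothesis $x\le m_1-m_2$ gives $m_1-x+i-1\ge m_2+i-1$, so everything reduces to showing that $\Delta$ is nondecreasing on the relevant range of integers.

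For $s\ge0$, Pascal's rule gives $\Delta(s)=\binom{s}{t-1}$, which is nonnegative and nondecreasing in $s\ge0$. This holds for $t\ge1$ directly, and for $t=0$, where $\Delta(s)=0$ for $s\ge0$. Hence the first statement follows, since both arguments are at least $m_2\ge0$.

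For the extension with $m_2<0$ and $t\ge1$, the arguments $m_2+i-1$ may be negative. There $f(s)=0=f(s+1)$ whenever $s+1<t$, in particular for all $s\le -1$ since $t\ge1$. So $\Delta(s)=0\le\Delta(s')$ for every $s'\ge0$. If both compared arguments are negative, both differences are $0$. If the larger one is nonnegative, the difference there is $\binom{\cdot}{t-1}\ge0$. Thus $\Delta$ is nondecreasing on all of $\mathbb{Z}$ when $t\ge1$, and the same telescoping argument applies. The hypothesis $x\le m_1-m_2$ keeps $m_1-x\ge m_2$, so the pairing is still valid. Note that $m_1-x$ may itself be negative, which is harmless because monotonicity of $\Delta$ holds on all of $\mathbb{Z}$.

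The only delicate point is the boundary behaviour forced by the convention $\binom{m}{j}=0$ unless $0\le j\le m$. In particular, for $t=0$ and negative arguments, $f$ jumps from $0$ at $s=-1$ to $1$ at $s=0$, which breaks monotonicity of $\Delta$. This explains why the extension requires $t\ge1$, and checking it carefully is the main point to get right.
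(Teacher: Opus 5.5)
Your proof is correct, but it takes a different route from the paper.

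\textbf{The paper's route.} The paper does not reprove the case $m_2\ge 0$; it cites Li. For $m_2<0$, $t\ge 1$ it argues by cases:
\begin{itemize}
\item Since $\binom{m_2}{t}=0$, the right-hand side is just $\binom{m_1}{t}$.
\item If $m_1-x\le 0$ or $m_2+x\le 0$, one term on the left vanishes. The other is at most $\binom{m_1}{t}$, because its upper index is at most $m_1$.
\item Otherwise, set $a=m_1-x>0$ and $b=m_2+x>0$. Vandermonde's identity gives $\binom{a}{t}+\binom{b}{t}\le\binom{a+b}{t}$; for $t\ge 1$ the terms $i=0$ and $i=t$ of the Vandermonde sum are distinct. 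Then $a+b=m_1+m_2<m_1$ finishes.
\end{itemize}

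\textbf{Your route.} You treat both parts uniformly by showing that $s\mapsto\binom{s}{t}$ has nondecreasing first differences. On $\mathbb{Z}_{\ge 0}$ this holds for every $t\ge 0$ via Pascal's rule. On all of $\mathbb{Z}$ it holds when $t\ge 1$, since $\Delta(s)=0$ for $s\le -1$. You then telescope and pair the differences, which uses only $m_1-x\ge m_2$.

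\textbf{Comparison.} Your argument is self-contained: it also supplies the case the paper delegates to Li. It shows the lemma is a discrete convexity statement. It also makes clear why the extension needs $t\ge 1$: for $t=0$ one has $\Delta(-1)=1>0=\Delta(0)$.

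The paper's argument is more ad hoc and needs no convexity discussion. It exploits the vanishing of $\binom{m_2}{t}$, and in its main subcase it yields the sharper intermediate bound $\binom{m_1-x}{t}+\binom{m_2+x}{t}\le\binom{m_1+m_2}{t}$.
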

		\begin{proof}
			The result has been proved in \cite{Li} for $m_2\ge 0$.
			
			Now suppose that \(m_2<0\). If \(m_2+x\le0\) or $m_1-x\le 0$, then $\binom{m_2+x}{t}=\binom{m_2}{t}=0$ or $\binom{m_1-x}{t}=0$. Since \(m_1-x\le m_1\) and $m_2+x\le m_1$, $\binom{m_1-x}{t} \le \binom{m_1}{t}$ and $\binom{m_2+x}{t}\le \binom{m_1}{t}$, and the result follows. It remains to consider the $m_1-x>0$, $m_2+x>0$. Set $a=m_1-x$ and $b=m_2+x$. Then \(a,b>0\). By Vandermonde's identity $\binom{a+b}{t} = \sum_{i=0}^{t}\binom ai\binom b{t-i}$, we have $$\binom at+\binom bt \le \binom{a+b}{t}.$$ Note that $a+b=m_1+m_2<m_1$. Then $\binom{a+b}{t} \le \binom{m_1}{t}$. Therefore,
			\[
			\begin{aligned}
				\binom{m_1-x}{t}
				+\binom{m_2+x}{t}
				=
				\binom at+\binom bt
				\le
				\binom{a+b}{t}
				\le
				\binom{m_1}{t}
				=
				\binom{m_1}{t}+\binom{m_2}{t}.
			\end{aligned}
			\]
			This proves the lemma.
		\end{proof}
			
			We use the convention that
			\[
			\binom{m}{0}=
			\begin{cases}
				1,&m\geq0,\\
				0,&m<0.
			\end{cases}
			\]
			Then the following result holds.
			
		\begin{lem}\label{combinatorial-identity2}
			For integers $m_{1}$, $m_{2}$, $x$ with $m_{1}\geq 0$, $m_{1}\geq m_{2}$ and $0\le  x\le  m_{1}-m_{2}$, and for any $t$ with $0\le t \le 1$, we have
			\begin{equation}
				\begin{aligned}
					&\binom{m_1-x}{0}+\sigma_1
					+\binom{m_2+x}{0}+\sigma_2
					+(1-t)\left[
					\binom{m_1-x}{1}+\binom{m_2+x}{1}
					\right]\\
					\le{}&
					\binom{m_1}{0}+\binom{m_2}{0}+\sigma
					+(1-t)\left[
					\binom{m_1}{1}+\binom{m_2}{1}
					\right],
				\end{aligned}
			\end{equation}
		where $\sigma_1=t$ if $m_1-x<0$ and $\sigma_1=0$ otherwise;
		$\sigma_2=t$ if $m_2+x<0$ and $\sigma_2=0$ otherwise;
		and $\sigma=t$ if $m_2<0$ and $\sigma=0$ otherwise.
		\end{lem}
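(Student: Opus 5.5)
The plan is to recast both sides as values of a single one-variable function and then use a discrete convexity (majorization) argument. For an integer $m$ define
\[
f(m)=\binom{m}{0}+\tau(m)+(1-t)\binom{m}{1},\qquad \tau(m)=\begin{cases} t,& m<0,\\ 0,& m\ge 0,\end{cases}
\]
with the conventions of the paper: $\binom{m}{0}=1$ for $m\ge 0$ and $0$ for $m<0$, and $\binom{m}{1}=m$ for $m\ge1$ and $0$ otherwise. By definition, the left-hand side of the inequality is exactly $f(m_1-x)+f(m_2+x)$, since $\sigma_1=\tau(m_1-x)$ and $\sigma_2=\tau(m_2+x)$. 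Because $m_1\ge 0$ we have $\tau(m_1)=0$ and $\sigma=\tau(m_2)$, so the right-hand side is exactly $f(m_1)+f(m_2)$. The claim therefore becomes
\[
f(a)+f(b)\le f(m_1)+f(m_2)\quad\text{whenever } a+b=m_1+m_2 \text{ and } m_2\le a,b\le m_1 .
\]
The hypothesis $0\le x\le m_1-m_2$ guarantees that $a=m_1-x$ and $b=m_2+x$ both lie in $[m_2,m_1]$.

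Next, compute $f$ explicitly. We have $f(m)=t$ for $m<0$, $f(0)=1$, and $f(m)=1+(1-t)m$ for $m\ge 1$. Hence the increments $\Delta(m)=f(m+1)-f(m)$ are $0$ for $m\le -2$ and $1-t$ for $m\ge -1$ (the step from $-1$ to $0$ is $1-t$ as well). Since $0\le t\le 1$, the increment sequence $\Delta$ is nondecreasing, so $f$ is discretely convex on $\mathbb{Z}$. Checking the jump at $m=-1$ is the only place where the $\sigma$-corrections matter; this is the point of including $\tau$, because without it the jump from $0$ to $1$ would break convexity when $t>0$.

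Finally, carry out the standard majorization step. By symmetry assume $a\ge b$, so $m_2\le b\le a\le m_1$ and $m_1-a=b-m_2=:d\ge 0$. Then
\[
f(m_1)-f(a)=\sum_{j=a}^{m_1-1}\Delta(j)\ \ge\ d\,\Delta(a)\ \ge\ d\,\Delta(b-1)\ \ge\ \sum_{j=m_2}^{b-1}\Delta(j)=f(b)-f(m_2),
\]
using that $\Delta$ is nondecreasing and $b-1<a$; when $d=0$ all terms vanish. Rearranging gives the desired inequality. I expect no real obstacle here. The only care needed is the bookkeeping that identifies the $\sigma$ terms with $\tau$, which is where $m_1\ge 0$ is used, and the verification that $\Delta(-1)=1-t\ge 0=\Delta(-2)$, which is what makes $f$ convex.
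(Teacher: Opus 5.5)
Your proof is correct. It shares the paper's reduction but proves the final inequality differently.

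\textbf{Shared reduction.} Your piecewise formula for $f$ says exactly that $f(m)=t+(1-t)\binom{m+1}{1}$. The paper extracts the same fact from the identity $\binom{a+1}{1}-\binom{a}{1}=\binom{a}{0}$, together with the observation that $t\binom{m}{0}+\tau(m)=t$ for every $m$. This reduces the lemma to $(1-t)$ times the inequality $\binom{m_1-x+1}{1}+\binom{m_2+x+1}{1}\le\binom{m_1+1}{1}+\binom{m_2+1}{1}$.

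\textbf{The difference.} The paper simply cites Lemma~\ref{lem2.1} with lower index $1$ and shifted upper indices $m_1+1$ and $m_2+1$. When $m_2\le -2$ this requires the separately proved extension of that lemma to a negative second argument, which uses a case split and Vandermonde's identity. You instead prove the needed transfer inequality from scratch: you check that the increments $\Delta(m)$ are nondecreasing and then run the standard majorization estimate.

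\textbf{What each route buys.} The paper's route is shorter and keeps this lemma in the same form as the binomial inequality used throughout the induction. Yours is self-contained and handles negative $m_2$ uniformly, with no case analysis. It also shows why the $\sigma$-corrections are needed at all: without them, $\binom{m}{0}+(1-t)\binom{m}{1}$ fails to be convex at the step from $m=-1$ to $m=0$ whenever $t>0$, and raising the value at $m=-1$ from $0$ to $t$ is exactly what repairs this.
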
 
		\begin{proof}
		For every integer $a$,
				\[
				\binom{a+1}{1}-\binom{a}{1}=\binom{a}{0}.
				\]
				Then, by the definitions of $\sigma_1,\sigma_2,\sigma$ and the assumption $m_1\ge0$, the inequality is equivalent to
				\[
				\begin{aligned}
					(1-t)\left[
					\binom{m_1-x+1}{1}+\binom{m_2+x+1}{1}
					\right] \le (1-t)\left[
					\binom{m_1+1}{1}+\binom{m_2+1}{1}
					\right].
				\end{aligned}
				\]
				By Lemma \ref{lem2.1}, we have
				\[
				\binom{m_1-x+1}{1}+\binom{m_2+x+1}{1}
				\le
				\binom{m_1+1}{1}+\binom{m_2+1}{1}.
				\]
				Multiplying both sides by $1-t\ge0$ completes the proof.
			\end{proof}
			We now consider some simple results for matroids.
		\begin{lem}\label{lem 2.4}
			Let $M$ be a matroid of rank $r$ on $n$ elements with girth $g$. For any $e\in E$, if $M$ is $U_{g-1,g+1}$-free, then $M\setminus e$ is $U_{g-1,g+1}$-free.
		\end{lem}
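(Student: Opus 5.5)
The plan is to argue directly from the definition of $N$-freeness, because deletion commutes with restriction. Throughout, $g$ denotes the girth of $M$, so the fixed target matroid is $N=U_{g-1,g+1}$, with $|E(N)|=g+1$. Deleting $|E(M)|-|E(N)|$ elements from $M$ leaves the restriction $M|X=M\setminus(E\setminus X)$ for some $X\subseteq E$ with $|X|=g+1$. So $M$ is $N$-free exactly when no $(g+1)$-subset $X\subseteq E$ has $M|X\cong N$. I will work with this reformulation.

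Suppose, for contradiction, that $M\setminus e$ is not $N$-free. Then there is a set $A\subseteq E\setminus\{e\}$ with $|A|=(n-1)-(g+1)$ and $(M\setminus e)\setminus A\cong N$. Put $A'=A\cup\{e\}$. Then $|A'|=n-(g+1)=|E(M)|-|E(N)|$. Since deleting a set of elements does not depend on the order of deletion, $M\setminus A'=(M\setminus e)\setminus A\cong N$. This contradicts the assumption that $M$ is $N$-free.

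The degenerate case is $n-1<g+1$. Then $M\setminus e$ has fewer elements than $N$, so no deletion of it can be isomorphic to $N$, and it is $N$-free vacuously. The only identity used is the rank-function description $rk_{M\setminus A}(X)=rk_M(X)$, which gives $(M\setminus e)\setminus A=M\setminus(A\cup\{e\})$ immediately.

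I do not expect a real obstacle here. The one point needing care is the interpretation of $g$: it is the girth of $M$ and stays fixed when passing to $M\setminus e$, even though the girth of $M\setminus e$ may be strictly larger. This is the form in which the lemma will be used, namely in a deletion–contraction induction where the excluded uniform matroid is held fixed. The lemma should be stated and proved with $g$ fixed in this way.
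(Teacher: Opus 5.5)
Your proof is correct. The paper states this lemma without proof, treating it as immediate from the definition of $N$-freeness, and your observation that $(M\setminus e)\setminus A=M\setminus(A\cup\{e\})$ with $|A\cup\{e\}|=|E(M)|-|E(U_{g-1,g+1})|$ is exactly the argument it implicitly relies on. Keeping $g$ fixed as the girth of $M$ is also the right reading, since the induction step applies the lemma only to an $e$ chosen so that $g(M\setminus e)=g$.
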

		\begin{lem}\label{lem2.2} 
			Let $M$ be a matroid on ground set $E$ with girth $g\ge 2$. Assume that $ e $ is an element contained in a circuit of size $g$. Then the girth $g(M/e)$ of $M/e$ is equal to $g-1$. Moreover, 
			\begin{eqnarray*}
				&&\{C\subseteq E: e\in C, C\setminus \{e\} \text{ is a circuit of } M/e, \text{ and } |C\setminus \{e\}|=g(M/e)\}\\
				&=&\{C'\subseteq E: e\in C', C' \text{ is a circuit of } M, \text{ and } |C'|=g\}.
			\end{eqnarray*}
			In particular, the number of $(g-1)$-circuits in $M/e$ is equal to the number of \(g\)-circuits of \(M\) containing \(e\).
		\end{lem}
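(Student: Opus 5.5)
The plan is to work directly with the rank function of the contraction, $rk_{M/e}(X)=rk_M(X\cup e)-rk_M(e)$. Since $g\ge 2$, $e$ is not a loop, so $rk_M(e)=1$. The key step is the standard correspondence between circuits of $M/e$ and circuits of $M$. For $D\subseteq E\setminus\{e\}$, $D$ is a circuit of $M/e$ if and only if $D$ is a minimal nonempty set with $D\cup e$ dependent in $M$. Equivalently, either $D\cup e$ is a circuit of $M$, or $D$ itself is a circuit of $M$ that is not properly refined in this sense. In either case $D$ or $D\cup e$ contains a circuit $C$ of $M$ with $C\setminus e\subseteq D$.

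From this correspondence, every circuit $D$ of $M/e$ satisfies $|D|\ge g-1$. Indeed, $D\cup e$ is dependent in $M$, so it contains a circuit $C$ of $M$ with $|C|\ge g$, and $D\supseteq C\setminus e$ gives $|D|\ge |C|-1\ge g-1$. Conversely, if $C'$ is a $g$-circuit of $M$ containing $e$, then $C'\setminus e$ is dependent in $M/e$, because $rk_M(C')=g-1<g$ gives $rk_{M/e}(C'\setminus e)=g-2<g-1$. So $C'\setminus e$ contains a circuit of $M/e$, which by the lower bound has size at least $g-1=|C'\setminus e|$. Hence $C'\setminus e$ is itself a circuit of $M/e$, and therefore $g(M/e)=g-1$.

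For the set equality, take $D$ a circuit of $M/e$ with $|D|=g-1$, and consider $C=D\cup e$. This set is dependent in $M$, since $rk_M(D\cup e)=rk_{M/e}(D)+1=|D|=g-1<g$. Thus it contains a circuit of $M$ of size at most $g$. That circuit must therefore equal $C$ and have size exactly $g$, so $C$ is a $g$-circuit containing $e$. I should also check the case where the circuit inside $D\cup e$ avoids $e$. Then it lies in $D$ and has size at most $g-1<g$, which is impossible. So $C$ is a circuit of size $g$ containing $e$. The reverse inclusion was shown in the previous paragraph. The bijection $C\mapsto C\setminus e$ then gives the counting statement.

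The only delicate point is the minimality/size argument that rules out circuits of $M$ avoiding $e$ inside $D\cup e$, and it is settled purely by the cardinality comparison against the girth $g$. I expect no real obstacle beyond careful bookkeeping of ranks.
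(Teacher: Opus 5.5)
Your proof is correct and follows essentially the same route as the paper. Both arguments use $rk_{M/e}(X)=rk_M(X\cup\{e\})-1$ to show that $C'\setminus\{e\}$ is a circuit of $M/e$ for every $g$-circuit $C'\ni e$, and both compare a circuit $D$ of $M/e$ with a circuit of $M$ contained in the dependent set $D\cup\{e\}$ via the girth. The only cosmetic difference is how minimality of $C'\setminus\{e\}$ is obtained: you get it from the lower bound $|D|\ge g-1$, whereas the paper checks directly that each $C'\setminus\{e,f\}$ is independent in $M/e$. Your opening dichotomy about circuits of $M/e$ is true but never actually needed.
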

		\begin{proof}
			Let $rk$ denote the rank function of $M$.  Clearly, $e$ is not a loop in $M$ as $g\ge 2$. Then by the definition of contraction, for any $ X\subseteq E\setminus \{e\} $,$$	rk_{M/e}(X) = rk_{M}(X \cup \{e\}) - rk_{M}(e)=rk_{M}(X \cup \{e\})-1.$$
			
			If $C'$ is a circuit of size $g$ containing $e$ in $M$, then $rk_M(C')=|C'|-1 =rk_M(C'\setminus\{f\})$ for every $f\in C'$. This implies that $rk_{M/e}(C'\setminus\{e\}) =|C'|-2 =rk_{M/e}(C'\setminus\{e,f\})$ for every $f\in C'\setminus\{e\}$. Hence, $C'\setminus\{e\}$ is a circuit in $M/e$, and $g(M/e)\le g-1$.
			
			Let $C\setminus \{e\}$ be a circuit of size $g(M/e)$ in $M/e$, that is, $rk_{M/e}(C\setminus \{e\})=|C\setminus \{e\}|-1=|C|-2=g(M/e)-1$. Then $rk_{M}(C)=rk_{M/e}(C\setminus \{e\})+1=|C|-1$. This implies that $C $ contains a circuit $C'$ in $M$. Since the girth of $M$ is $g$, we have $g\leq |C'|\leq |C|=g(M/e)+1$. This implies $g(M/e)\geq g-1$.  Hence, $g(M/e)=g-1$, and  $C'=C$ is a circuit of $M$. 
		\end{proof}
		
		\begin{lem}\label{lem 2.3}
			Let $M$ be a matroid of rank $r$ on $n$ elements with girth $g\ge 2$. Let $e$ be an element of a $g$-circuit. If $M$ is $U_{g-1,g+1}$-free, then $M/e$ is $U_{g-2,g}$-free.
		\end{lem}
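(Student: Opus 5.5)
We argue by contraposition. Suppose $M/e$ is not $U_{g-2,g}$-free. By the definition of $N$-freeness used in the paper, there is a set $D\subseteq E\setminus\{e\}$ with $|D|=|E|-1-g$ such that $(M/e)\setminus D\cong U_{g-2,g}$. Let $X=E\setminus(D\cup\{e\})$, so $|X|=g$. We will show that $M\setminus D$, which has ground set $X\cup\{e\}$ of size $g+1$, is isomorphic to $U_{g-1,g+1}$. This contradicts the hypothesis that $M$ is $U_{g-1,g+1}$-free, since $|D|=|E(M)|-|E(U_{g-1,g+1})|$.

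First we compute ranks in $M$ via $rk_{M/e}(Y)=rk_M(Y\cup\{e\})-1$. The element $e$ is not a loop because $g\ge 2$.
\begin{itemize}
\item Since $(M/e)|X\cong U_{g-2,g}$, every $Y\subseteq X$ satisfies $rk_M(Y\cup\{e\})=\min(|Y|,g-2)+1=\min(|Y\cup\{e\}|,g-1)$.
\item Every subset of $X$ has size at most $g$. Any proper subset of size at most $g-1$ is independent in $M$, because the girth is $g$. Hence $rk_M(Y)=|Y|$ for $|Y|\le g-1$.
\item For $Y=X$ we have $rk_M(X)\ge g-1$ (it contains an independent set of size $g-1$). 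Also $rk_M(X)\le rk_M(X\cup\{e\})=g-1$. So $rk_M(X)=g-1$.
\end{itemize}
Together these give $rk_{M\setminus D}(Z)=\min(|Z|,g-1)$ for every $Z\subseteq X\cup\{e\}$. That is exactly the rank function of $U_{g-1,g+1}$.

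This gives the contradiction, and the lemma follows. The hypothesis that $e$ lies in a $g$-circuit is not actually needed for this argument; it only ensures, via Lemma \ref{lem2.2}, that the girth of $M/e$ is $g-1$, which is the reason for the indexing $U_{g-2,g}$.

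The step needing the most care is the rank computation for subsets not containing $e$, specifically $Y=X$ itself. It is handled by the sandwich $g-1\le rk_M(X)\le g-1$, which combines the girth bound with monotonicity (R2).
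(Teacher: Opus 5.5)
Your proof is correct, and it has the same skeleton as the paper's: from a $U_{g-2,g}$-restriction of $M/e$ on a $g$-set $X$, show that $M$ restricted to $X\cup\{e\}$ is $U_{g-1,g+1}$. The mechanics differ.

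The paper uses Lemma \ref{lem2.2} to lift each $(g-1)$-circuit $X\setminus\{f\}$ of $M/e$ to a $g$-circuit $(X\setminus\{f\})\cup\{e\}$ of $M$. It then applies submodularity (R3) to $X$ and $(X\setminus\{f\})\cup\{e\}$ to get $rk(X)\ge rk(X\cup\{e\})$. Hence $rk(X)=g-1$ and $X$ is itself a circuit, and the paper concludes because every $g$-subset of $X\cup\{e\}$ is a circuit.

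You instead read off $rk_M(Y\cup\{e\})$ for all $Y\subseteq X$ directly from the contraction formula. You handle the sets avoiding $e$ of size at most $g-1$ by the girth. You replace the submodularity step by the sandwich $g-1\le rk_M(X)\le rk_M(X\cup\{e\})=g-1$ and check the whole rank function at once.

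Your version is shorter and self-contained, using neither Lemma \ref{lem2.2} nor (R3). As you note, it also shows that the hypothesis that $e$ lies in a $g$-circuit is superfluous. Indeed, a $U_{g-2,g}$-restriction of $M/e$ already makes each $Y\cup\{e\}$, with $Y\subseteq X$ and $|Y|=g-1$, a $g$-circuit of $M$. The paper's version stays in the language of circuits and reuses Lemma \ref{lem2.2}, which it needs anyway for the girth and circuit-count bookkeeping in the induction.
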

		\begin{proof}
			Assume that \(M/e\) contains a
			\(U_{g-2,g}\), where the ground set of \(U_{g-2,g}\) is \(E'\). Then $|E'|=g$ and $rk_{M/e}(E')=rk_{U_{g-2,g}}(E')=g-2$ by the definition of deletion. By the definition of uniform matroids, $g(U_{g-2,g})=g-1$, and $E'\setminus \{f\}$ is a circuit of size $g-1$ of $M/e$ for any \(f\in E'\). By Lemma \ref{lem2.2}, \(E'\setminus \{f\}\cup\{e\}\) is a circuit of \(M\). Then \begin{equation}\label{circult-rank}
				rk(E'\setminus \{f\}\cup\{e\})=rk(E'\setminus \{f\})=|E'|-1=g-1.
			\end{equation} 
			By (R3), $	rk(E')+rk(E'\setminus \{f\}\cup\{e\})
			\geq
			rk(E'\setminus \{f\})+rk(E'\cup\{e\}).$
			Then $rk(E')\geq rk(E'\cup \{e\}).$ On the other hand, by (R2) and the definition of contraction, $$rk(E')= rk(E'\cup\{e\})=rk_{M/e}(E')+1=g-1.$$ 
			By Equation (\ref{circult-rank}), \(E'\) is a circuit of \(M\). Hence, every \(g\)-subset of \(E'\cup\{e\}\) is a circuit of \(M\), that is, $M\setminus (E(M)\setminus E'\setminus e)\cong U_{g-1,g+1}$, contrary to the assumption that \(M\) is \(U_{g-1,g+1}\)-free.
		\end{proof}

		\begin{lem}\label{unique-circuit}
			Let $M$ be a matroid on $n$ elements of rank $r$ with girth $g$. If $r=n-1$, then for any non-negative integer $k$ and any real number $t$ with $0\leq t\leq 1$,
			\begin{eqnarray*}
				[x^{k}]T_{M}(x,t) &=& \binom{n-k-1}{n-k-1} - \binom{n-k-g+1}{n-k-g+1}+\sigma,
			\end{eqnarray*}
			where $\sigma=t$ if $k=n-g$, $\sigma=1$ if $k=n-g+1$ and $\sigma=0$ otherwise.
		\end{lem}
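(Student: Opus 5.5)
Since $r=n-1$, the matroid $M$ has nullity $1$. I would first pin down its structure exactly, then compute $T_M$ explicitly and compare coefficients with the right-hand side, which is really an indicator-function expression in $k$.

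\textbf{Structure.} Because $rk(E)=n-1<n$, the set $E$ is dependent, so it contains a circuit $C$, and $|C|\ge g$ by the definition of girth. I claim $C$ is the unique circuit of $M$. If $C'\neq C$ were another circuit, circuit elimination would give a circuit inside $(C\cup C')\setminus\{e\}$ for any $e\in C\cap C'$. That would force $rk(E)\le n-2$, a contradiction. Equivalently, any two distinct circuits would make the nullity at least $2$.

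So $M$ has exactly one circuit, it has size $g$, and every element outside $C$ lies in no circuit and is therefore a coloop. Hence
\[
M\cong U_{g-1,g}\oplus U_{n-g,n-g}.
\]

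\textbf{Tutte polynomial.} Contracting the $n-g$ coloops one at a time with the coloop rule gives
\[
T_M=x^{n-g}\,T_{U_{g-1,g}}.
\]
For $g\ge 2$, Lemma \ref{lem1.2} with $r=g-1$ and $n=g$ gives
\[
T_{U_{g-1,g}}(x,y)=x+x^2+\cdots+x^{g-1}+y.
\]
For $g=1$, the circuit is a loop and $T_{U_{0,1}}=y$, which agrees with the same formula read as an empty sum plus $y$. Therefore
\[
T_M(x,t)=t\,x^{n-g}+x^{n-g+1}+\cdots+x^{n-1}.
\]
So $[x^k]T_M(x,t)$ equals $t$ if $k=n-g$, equals $1$ if $n-g+1\le k\le n-1$, and equals $0$ otherwise.

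\textbf{Matching the formula.} Under the convention that $\binom{m}{m}=1$ exactly when $m\ge 0$, the two binomial terms are indicators:
\[
\binom{n-k-1}{n-k-1}=\mathbf{1}[k\le n-1],\qquad \binom{n-k-g+1}{n-k-g+1}=\mathbf{1}[k\le n-g+1].
\]
Their difference is $1$ exactly for $n-g+2\le k\le n-1$ and $0$ otherwise. Adding $\sigma$ contributes $t$ at $k=n-g$ and $1$ at $k=n-g+1$, which recovers the computed coefficients.

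The main point needing care is the boundary case $g=1$. There the two special values of $k$ are $n-1$ and $n$:
\begin{itemize}
\item At $k=n-1$ the binomial difference is $1-1=0$, and $\sigma=t$ gives the total $t$.
\item At $k=n$ the difference is $0-1=-1$, and $\sigma=1$ gives the total $0$.
\end{itemize}
This relies on the paper's convention $\binom{-1}{-1}=0$. Both values match $T_M=x^{n-1}y$. The structural step, uniqueness of the circuit, is standard.
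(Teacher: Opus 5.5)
Your proof is correct and follows the same route as the paper. Both identify the unique $g$-circuit and the $n-g$ coloops, write $T_M(x,y)=x^{n-g}y+\sum_{i=n-g+1}^{n-1}x^i$, and match coefficients against the indicator-valued binomials, including the exceptional case $g=1$, $k=n$, where $\binom{-1}{-1}=0$ and $\sigma=1$ cancels the $-1$. You give more justification than the paper (uniqueness of the circuit, and the direct-sum computation via Lemma \ref{lem1.2}), with one small slip: your circuit-elimination argument assumes $C\cap C'\neq\emptyset$. The disjoint case follows at once from submodularity, since $rk(C\cup C')\le rk(C)+rk(C')=|C\cup C'|-2$.
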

		\begin{proof}
			Since $r=n-1$, we have that $M$ has a unique circuit $C$ with $|C|=g$, and there are $n-g$ coloops in $M$. A straightforward verification shows that
			\begin{eqnarray*}
				T_{M}(x,y)&=&\sum_{i=n-g+1}^{n-1}x^i+x^{n-g}y.
			\end{eqnarray*}
			Hence,
			\begin{equation*}\label{Tutte-DC}
				[x^{k}]T_{M}(x,t)=\left\{\begin{array}{ll}
					1,&n-g+1\leq k\leq n-1,\\
					t,&k=n-g=r-g+1,\\
					0,&\text{ otherwise}.
				\end{array}\right.
			\end{equation*}
			
			Let $A=\binom{n-k-1}{n-k-1} - \binom{n-k-g+1}{n-k-g+1}$.
			\begin{itemize}
				\item [(i)] If $ n-g+1< k\leq n-1 $, then $A=1-0=1$.
				\item [(ii)] If $k>n-1$ or $k\le n-g+1$, then $A=0$,
				except when $g=1$ and $k=n$.
			\end{itemize}
			In each case, $A+\sigma=[x^k]T_M(x,t)$.
		\end{proof}

		The $\mathit{dual}$ of a matroid $M = (E,r)$ is defined by the matroid $M^{*} = (E,r^{*})$, where $r^{*}(A) = |A| + r(E\setminus A)-r$ for any $A\subseteq E$. Then 
		\begin{eqnarray}\label{dual-rank}
			r^{*}=r^{*}(M^{*})=|E|-r,
		\end{eqnarray}
		$(M^{*})^{*}=M$, and $e\in E$ is a loop in $M$ if and only if $e$ is a coloop in $M^{*}$. From the definition of the Tutte polynomial, it follows that 
		\begin{eqnarray}\label{dual-Tutte}
			T_{M} (x,y) = T_{M^{*}}(y,x).
		\end{eqnarray}
		For any $X\subseteq E$, $r(X\cup e)=r(X)+1=r$ if and only if $r^{*}(E\setminus X\setminus e)=r^{*}(E\setminus X)=|E\setminus X|-1$ for all $e\in E\setminus X$. Hence,
		\begin{eqnarray}\label{dual-circult}
			\text{$X$ is a circuit of $M^*$ if and only if $E \setminus X$ is a hyperplane of $M$.} 
		\end{eqnarray} 
		\textbf{Proof of Theorems \ref{thm1.6} and \ref{thm1.7}}.
		
		For Theorem \ref{thm1.6}(ii), the assumption $h\le n-3$ implies that $M^{*}$ is simple. Moreover, since $(M^{*}\setminus A)^{*}=M/A$ for any $A\subseteq E$, the forbidden-contraction hypothesis implies that $M^{*}$ is $U_{n-h-1,n-h+1}$-free. Thus (ii) follows by applying (i) to $M^{*}$. We only need to prove conclusion (i) using (\ref{dual-rank}), (\ref{dual-Tutte}), and (\ref{dual-circult}). At $t=1$, the expression in Theorem \ref{thm1.6}(i) equals that in Theorem \ref{thm1.7}(i) for $k\le r-g+1$. For $k\ge r-g+2$, Theorem \ref{thm1.7}(i) follows directly from Theorem \ref{thm1.3}. Assume that $r=n-1$. Then  $c_{g}=1$. This implies that
		\begin{eqnarray*}
			&&\binom{n-k-1}{n-1-k} - \binom{n-k-g+1}{n-1-k-g+2} + \binom{n-1-k-g+1}{n-1-k-g+2}\\&+& t\left[\binom{n-k-g}{n-1-k-g+2} - \binom{n-1-k-g}{n-1-k-g+2} \right]\\&=&\binom{n-k-1}{n-k-1} - \binom{n-k-g+1}{n-k-g+1}.
		\end{eqnarray*}
		Conclusion (i) holds by Lemma \ref{unique-circuit}.
		
		Now suppose that $r\leq n-2$. If $k>r-g+2$, then $$\binom{n-k-g+1}{r-k-g+2}=\binom{n-c_{g}-k-g+1}{r-k-g+2}=\binom{n-c_{g}-k-g}{r-k-g+2}=\binom{n-k-g}{r-k-g+2}=0.$$
		In this case, the conclusion also holds by Theorem \ref{thm1.3}.

		Suppose that $k=r-g+2$. Note that $r\leq n-2$. Then $n-k-g\geq 0$. This implies 
		\begin{eqnarray*}
			B	&=&
			\binom{n-k-1}{r-k}-\binom{n-k-g+1}{r-k-g+2}	+\binom{n-c_g-k-g+1}{r-k-g+2}\\
			&&+t
			\left[
			\binom{n-k-g}{r-k-g+2}
			-
			\binom{n-c_g-k-g}{r-k-g+2}
			\right]\\
			&=&\binom{n-k-1}{r-k}-1	+\binom{n-c_g-k-g+1}{0}+t
			\left[1-\binom{n-c_g-k-g}{0}
			\right].
		\end{eqnarray*}
		
		\begin{itemize}
			\item [(i)] If $c_g\leq n-k-g$, then $B=\binom{n-k-1}{r-k}$.
			\item [(ii)] If $c_g\geq n-k-g+2=n-r$, then $B=\binom{n-k-1}{r-k}-1+t$.
			\item [(iii)] If $c_g=n-k-g+1$, then $B=\binom{n-k-1}{r-k}+t$.
		\end{itemize}
		In either case, the upper bound is satisfied by Theorem \ref{thm1.3}.
		
		We now suppose that $k\leq r-g+1\leq n-g-1$. We use induction on the girth $g$, and, for each fixed $g$, induction on $c_g$. The base girth is $1$ when $t=1$, and $3$ when $0\le t<1$. Thus the induction step assumes $g\ge2$ when $t=1$, and $g\ge4$ otherwise.  We begin with the induction step.
		
		\textbf{Induction step.}  Assume inductively that the assertion holds for every admissible matroid of girth less than \(g\) with $g\ge 2$. Let $e$ be an element of a circuit of size $g$ in $M$. Then $e$ is neither loop or coloop. By Lemma \ref{lem2.2}, $g(M/e)=g-1$. Let $c_{g-1}(M/e)$ denote the number of circuits of size $g-1$ in $M/e$. By Lemma \ref{lem2.2} again, $c_{g-1}(M/e)=d_{g}(e)$.  First, we examine the case where $c_{g}=1$. By Theorem \ref{thm1.3} and $r(M\setminus e)=r\le (n-1)-1$, we have
		\begin{eqnarray*}
			[x^{k}]T_{M\setminus e}(x,t) &\le& \binom{(n-1)-k-1}{r-k}=\binom{n-k-2}{r-k} .
		\end{eqnarray*}
		Clearly, $c_{g-1}(M/e)=1<(n-1)-(r-1)=|E(M/e)|-r(M/e)$ as $r\le n-2$. By the inductive hypothesis and Lemma \ref{lem 2.3},
		\begin{eqnarray*}
			[x^{k}]T_{M/e}(x,t)&\le&\binom{n-k-2}{r-k-1} - \binom{n-k-g+1}{r-k-g+2} + \binom{n-k-g}{r-k-g+2}\\
			&&+t\left[ \binom{n-k-g}{r-k-g+2} - \binom{n-k-g-1}{r-k-g+2} \right].
		\end{eqnarray*}
		Hence, by the definition of the Tutte polynomial,
		\begin{eqnarray*}
			[x^{k}]T_{M}(x,t) &=& [x^{k}]T_{M\setminus e}(x,t) +[x^{k}]T_{M/e}(x,t)\\ &\le& \binom{n-k-1}{r-k} - \binom{n-k-g+1}{r-k-g+2} + \binom{n-k-g}{r-k-g+2}\\&&+t\left[\binom{n-k-g}{r-k-g+2} - \binom{n-k-g-1}{r-k-g+2}  \right].
		\end{eqnarray*}
		Hence the inequality holds when $c_{g}=1$.
		
		Now suppose that the result holds for all admissible matroids of girth $g$ with fewer than $ c_{g} $ circuits of size $g$. Then there exist two circuits in $M$ $C_1$ and $C_2$ satisfying $C_1\neq C_2$ and $|C_1|=|C_2|=g$. This implies that $C_2$ is also a circuit in $M\setminus e'$  for any $e'\in C_1\setminus C_2$. Let $e$ be an element of a circuit of size $g$ in $ M $ such that $ g(M\setminus e)=g$. Let $c_{g}(M\setminus e)$ denote the number of circuits of size $g$ in $ M\setminus e$. Let $d_{g}(e)$ denote the number of circuits containing $e$ of size $g$ in $M$.  Note that $ c_{g}(M\setminus e) =c_{g}-d_{g}(e) $. Then, by Lemmas \ref{lem 2.4} and \ref{lem 2.3}, and the inductive hypothesis (\textcolor{red}{only by the inductive hypothesis for  $t=1$}),
		\begin{eqnarray*}
			[x^{k}]T_{M/e}(x,t)&\le& \binom{n-k-2}{r-k-1} - \binom{n-k-g+1}{r-k-g+2} + \binom{n-d_{g}(e)-k-g+1}{r-k-g+2}\\&&+t\left[ \binom{n-k-g}{r-k-g+2} - \binom{n-d_{g}(e)-k-g}{r-k-g+2}  \right] +\sigma ',
		\end{eqnarray*}
		where $\sigma '=t$ if $n\leq r+d_{g}(e)$ and $k=r-g+1$, $\sigma '=0$ otherwise,
		and	
		\begin{eqnarray*}
			[x^{k}]T_{M\setminus e}(x,t)&\le& \binom{n-k-2}{r-k} - \binom{n-k-g}{r-k-g+2}+ \binom{n-c_{g}+d_{g}(e)-k-g}{r-k-g+2}\\
			&&+t\left[ \binom{n-k-g-1}{r-k-g+2} - \binom{n-c_{g}+d_{g}(e)-k-g-1}{r-k-g+2} \right]+\zeta,
		\end{eqnarray*}
		where $\zeta=t$ if $n\leq r+c_{g}-d_{g}(e)+1$ and $k=r-g+1$, $\zeta=0$ otherwise. Denote 
		\begin{eqnarray*}
			A&=&\left[ \binom{n-d_{g}(e)-k-g+1}{r-k-g+2} +\binom{n-c_{g}+d_{g}(e)-k-g}{r-k-g+2} \right]\\
			&&-t \left[ \binom{n-d_{g}(e)-k-g}{r-k-g+2} +\binom{n-c_{g}+d_{g}(e)-k-g-1}{r-k-g+2} \right]
		\end{eqnarray*}
		and
		\begin{eqnarray*}
			B=\left[\binom{n-k-g}{r-k-g+2}+\binom{n-c_{g}-k-g+1}{r-k-g+2} \right]-t \left[ \binom{n-k-g-1}{r-k-g+2} +\binom{n-c_{g}-k-g}{r-k-g+2} \right].
		\end{eqnarray*}
		If $k=r-g+1$, then by Lemma \ref{combinatorial-identity2}, $A+\sigma ' +\zeta =B+ \sigma $.
		If $k\neq r-g+1$ (i.e., $k<r-g+1$), then by Lemma \ref{lem2.1} (setting $ x=d_{g}(e)-1$), we have
		\begin{eqnarray*}
			A&=&   \left[ \binom{n-d_{g}(e)-k-g}{r-k-g+1} +\binom{n-c_{g}+d_{g}(e)-k-g-1}{r-k-g+1} \right]\\
			&&+(1-t) \left[ \binom{n-d_{g}(e)-k-g}{r-k-g+2} +\binom{n-c_{g}+d_{g}(e)-k-g-1}{r-k-g+2} \right]\\
			&\le&  \left[\binom{n-k-g-1}{r-k-g+1}+\binom{n-c_{g}-k-g}{r-k-g+1} \right]\\
			&&+ (1-t) \left[ \binom{n-k-g-1}{r-k-g+2} +\binom{n-c_{g}-k-g}{r-k-g+2} \right]   \quad \text{(by Lemma 2.1)} \\ 
			&=&B.
		\end{eqnarray*}
		By the definition of the Tutte polynomial, we have
		\begin{eqnarray*}
			[x^{k}]T_{M}(x,t) &=& [x^{k}]T_{M\setminus e}(x,t) +[x^{k}]T_{M/e}(x,t) \\
			&\le & \left[ \binom{n-k-2}{r-k-1} +\binom{n-k-2}{r-k} \right] - \binom{n-k-g+1}{r-k-g+2} -\binom{n-k-g}{r-k-g+2}\\
			&&+  \left[ \binom{n-d_{g}(e)-k-g+1}{r-k-g+2} +\binom{n-c_{g}+d_{g}(e)-k-g}{r-k-g+2} \right]\\
			&& +t\left[ \binom{n-k-g}{r-k-g+2} +\binom{n-k-g-1}{r-k-g+2} \right] \\ 
			&&-t \left[ \binom{n-d_{g}(e)-k-g}{r-k-g+2} +\binom{n-c_{g}+d_{g}(e)-k-g-1}{r-k-g+2} \right] +\sigma '+\zeta \\ 
			&\le& \binom{n-k-1}{r-k} - \binom{n-k-g+1}{r-k-g+2} + \binom{n-c_{g}-k-g+1}{r-k-g+2}\\
			&&+t\left[ \binom{n-k-g}{r-k-g+2} - \binom{n-c_{g}-k-g}{r-k-g+2}  \right] +\sigma,
		\end{eqnarray*}
		where $\sigma=t$ if $n\leq r+c_{g}$ and $k=r-g+1$, $\sigma=0$ otherwise.

			\textbf{Base step.} (a) For	$t=1$, it suffices to prove that if $g=1$, $r\leq n-2$ and $k\leq r-g+1=r$, then
			\[
			[x^k]T_M(x,1)
			\le
			\binom{n-c_1-k-1}{r-k}+\sigma,
			\]
			where $\sigma=1$ if $n\le r+c_1$ and $k=r$,
			and $\sigma=0$ otherwise.
			
			We now proceed by induction on the number  $ c_{1} $ of loops. Let $ e $ be a loop of $ M $. Note that $r(M\setminus e)=r$ and $|E(M\setminus e)|=n-1$. First, assume that
			$c_{1} = 1$. Then, by Theorem \ref{thm1.3},
			$$ [x^{k}]T_{M}(x,1) = [x^{k}]T_{M\setminus e}(x,1) \le \binom{n-k-2}{r-k}=\binom{n-c_{1}-k-1}{r-k}.$$ 
			
			Thus, the inequality holds when $g=1$ and $c_{1}=1$.
			
			Now assume that the result holds for all matroids of girth $ g=1 $ containing fewer than $ c_{1} $ loops. Note that $ r(M\setminus e) = r$. Let $ c_{1}(M\setminus e) $ denote the number of loops in $ M\setminus e $.  It is easy to verify $c_{1}(M\setminus e)=c_{1}-1$.
			Then by the inductive hypothesis,
			\begin{eqnarray*}
				[x^{k}]T_{M}(x,1)=[x^{k}]T_{M\setminus e}(x,1)&\le&\binom{(n-1)-(c_{1}-1)-k-1}{r-k}+\sigma\\
				&=&\binom{n-c_{1}-k-1}{r-k}+\sigma,
			\end{eqnarray*}
			where $\sigma=1$ if $n\leq r+c_g$ and $k=r$, and $\sigma=0$ otherwise.	Hence, the conclusion holds. This completes the proof of Theorem \ref{thm1.7}.

				(b)	For	$t=0$, it suffices to prove that if $g=3$, $r\leq n-2$ and $k\leq r-g+1=r-2$, then
				\begin{flalign*}
					[x^k]T_M(x,0)
					\leq
					\binom{n-k-2}{r-k}
					+
					\binom{n-c_3-k-2}{r-k-1}.
				\end{flalign*}
				We now use induction on $c_{3}$.
				First assume that $c_{3}=1$, and let $e$ be an element of this $3$-circuit. By Theorem  \ref{thm1.3},
				$$ [x^{k}]T_{M\setminus e}(x,0) \le \binom{n-k-2}{r-k}.$$
				By Lemma \ref{lem2.2}, $g(M/e)= 2$. 
				Now let $f$ be an element of a circuit of size $2$ in $M/e$. By Lemma \ref{lem2.2}, we have $g(M/e/f)=1$. Let $z$ be a loop in $M/e/f$.
				Note that $|E(M/e\setminus f)|=n-2$, $rk(M/e\setminus f)=r-1\le (n-2)-1$, $|E(M/e/f\setminus z)|=n-3$, and $rk(M/e/f\setminus z)=r-2\le (n-3)-1$.
				Then by Theorem \ref{thm1.3} and the definition of the Tutte polynomial,
				\begin{eqnarray*}
					[x^{k}]T_{M/e}(x,0)&=&[x^{k}]T_{M/e\setminus f}(x,0) + [x^{k}]T_{M/e/f}(x,0)\\
					&\le&\binom{n-k-3}{r-k-1} + 0[x^{k}]T_{M/e/f\setminus z}(x,t)\\ &= &\binom{n-k-3}{r-k-1}.
				\end{eqnarray*}
				Hence, by the definition of the Tutte polynomial,
				\begin{eqnarray*}
					[x^{k}]T_{M}(x,0)&=&[x^{k}]T_{M\setminus e}(x,0) + [x^{k}]T_{M/e}(x,0)\\
					&\le &\binom{n-k-2}{r-k} + \binom{n-k-3}{r-k-1}.
				\end{eqnarray*}
				So the inequality is satisfied when $t=0$, $g=3$ and $c_{3}=1$.
				
				Now assume that the result holds for all admissible matroids of girth $3$ with fewer than $c_{3}$ circuits of size $3$. Let $e$ be an element of a $3$-circuit such that $g(M\setminus e)=3$. By Lemmas \ref{lem 2.4} and \ref{lem 2.3}, $M/e$ and $M\setminus e$ are
				$U_{1,3}$-free and $U_{2,4}$-free, respectively,
				since $M$ is $U_{2,4}$-free.
				By Lemma \ref{lem2.2}, $g(M/e)=2$.
				Let $f$ be an element of a circuit of size $2$ in $M/e$. After contracting $f$, we obtain a loop by Lemma \ref{lem2.2}. More precisely, there exists a unique loop in $M/e/f$ as $M/e$ is $U_{1,3}$-free. Let $d_{3}(e)$ denote the number of circuits of size $3$ which contain $e$ in $M$. Let $f_{1}, f_{2}, \dots , f_{d_{3}(e)}$ be the elements of circuits of size $2$ in $M/e$ such that $\{f_{i}, f_{j}\}$ is not a circuit in $M/e$ for any  $1\le i<j\le d_{3}(e)$. Note that $k\leq r-2$. Then by Theorem \ref{thm1.3} and the definition of the Tutte polynomial,
				\begin{eqnarray*}
					[x^{k}]T_{M/e}(x,0)&=&[x^{k}]T_{M/e\setminus f_{1}}(x,0) + [x^{k}]T_{M/e/f_{1}}(x,0)\\
					&=&[x^{k}]T_{(M/e\setminus f_{1})\setminus f_{2}}(x,0) + [x^{k}]T_{(M/e\setminus f_{1})/f_{2}}(x,0) +0\binom{n-k-4}{r-k-2}\\
					&=& [x^{k}]T_{(M/e\setminus f_{1}\setminus f_{2})\setminus f_{3}}(x,0) + [x^{k}]T_{(M/e\setminus f_{1}\setminus f_{2})/f_{3}}(x,0) +0\binom{n-k-5}{r-k-2}\\
					&=& \dots =[x^{k}]T_{M/e\setminus \{f_{1},f_{2},\dots,f_{d_{3}(e)}\}}(x,0) \\
					&\leq & \binom{n-k-2-d_{3}(e)}{r-k-1}.
				\end{eqnarray*}
				
				Note that $c_{3}(M\setminus e)=c_{3}-d_{3}(e)$, where $c_{3}(M\setminus e)$ denotes the number of circuits of size $3$ in $M\setminus e$.
				Now by the inductive hypothesis,
				\begin{eqnarray*}
					[x^{k}]T_{M\setminus e}(x,0) &\le& \binom{n-k-3}{r-k}+ \binom{n-c_{3}+d_{3}(e)-k-3}{r-k-1}+\sigma,
				\end{eqnarray*}
				where $\sigma=t$ if $n\leq r+c_{3}-d_{3}(e)$ and $k=r-2$, and $\sigma=0$ otherwise.
				
				Then by Lemma \ref{lem2.1} and the definition of the Tutte polynomial,
				\begin{eqnarray*}
					[x^{k}]T_{M}(x,0)&=&[x^{k}]T_{M\setminus e}(x,0) + [x^{k}]T_{M/e}(x,0)\\
					&\le & \binom{n-k-2-d_{3}(e)}{r-k-1}+\binom{n-k-3}{r-k}+ \binom{n-c_{3}+d_{3}(e)-k-3}{r-k-1}+\sigma\\
					&\le&\binom{n-k-3}{r-k} + \binom{n-k-3}{r-k-1}+
					\binom{n-c_3-k-2}{r-k-1}+\sigma\quad \quad \text{(by Lemma 2.1)}\\
					&=&\binom{n-k-2}{r-k}
					+
					\binom{n-c_3-k-2}{r-k-1}+\sigma,
				\end{eqnarray*}
				where $\sigma=t$ if $n\leq r+c_{3}-d_{3}(e)\leq r+c_{3}$ and $k=r-2$, and $\sigma=0$ otherwise.
				
				Hence, 	the conclusion holds for $t=0$.
				
				(c) Note that $T_{M}(x,t)$ is a polynomial with non-negative coefficients.  For each $k$, the function $t \mapsto [x^k]T_M(x,t)$ is convex on $[0,1]$. Hence,  
				\begin{eqnarray*}
					[x^{k}]T_{M}(x,t)&\le&(1-t)[x^{k}]T_{M}(x,0) + t[x^{k}]T_{M}(x,1)\\
					&\le & (1-t)\left(\binom{n-k-2}{r-k}
					+
					\binom{n-c_3-k-2}{r-k-1}\right)\\
					&&+t\left(
					\binom{n-k-1}{r-k}
					-
					\binom{n-k-3}{r-k-2}
					+
					\binom{n-c_3-k-3}{r-k-2}
					+\sigma'\right)\\
					&=&	\binom{n-k-1}{r-k}
					-
					\binom{n-k-3}{r-k-2}
					+
					\binom{n-c_3-k-3}{r-k-2}
					\\
					&&+(1-t)
					\left[
					\binom{n-c_3-k-3}{r-k-1}
					-
					\binom{n-k-3}{r-k-1}
					\right]
					+t\sigma',
				\end{eqnarray*}
				where $\sigma'=1$ if $n\leq r+c_g$ and $k=r-g+1$, and $\sigma'=0$ otherwise. Hence, the conclusion holds for all admissible matroids of girth $3$, completing the proof of the base case.

				In conclusion, the result  holds.
				This completes the proof of Theorem \ref{thm1.6}.

				\section{Applications of main results}
				The following conclusion is an immediate consequence of Theorem \ref{thm1.6} (i).
				\begin{cor}\label{x-0}
					Let $M$ be a simple matroid of rank $r$ on $n$ elements with $1\le r\le n-1$, girth $g$ and $U_{g-1,g+1}$-free. Let $c_{g}$ denote the number of circuits of size $g$.  Then, for every non-negative integer $k$, 
					\begin{eqnarray*}
						[x^{k}]T_{M}(x,0) &\le& \binom{n-k-1}{r-k} - \binom{n-k-g+1}{r-k-g+2} + \binom{n-c_{g}-k-g+1}{r-k-g+2}+\sigma,
					\end{eqnarray*}
					where $\sigma =1$ if $n\le r+c_{g}$ and $k=r-g+2$, and $\sigma =0$ otherwise.	
				\end{cor}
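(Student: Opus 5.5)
The plan is to specialize Theorem \ref{thm1.6}(i) at $t=0$ and simplify the correction term. The hypotheses of the corollary are exactly those of Theorem \ref{thm1.6}(i): $M$ is simple and $U_{g-1,g+1}$-free, with $1\le r\le n-1$. The value $t=0$ lies in $[0,1]$, so the theorem applies directly.

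Setting $t=0$, the bracketed term
\[
t\left[\binom{n-k-g}{r-k-g+2}-\binom{n-c_g-k-g}{r-k-g+2}\right]
\]
vanishes, whatever the values of the binomials under the convention $\binom{m}{j}=0$ unless $0\le j\le m$. What remains is the bound $\binom{n-k-1}{r-k}-\binom{n-k-g+1}{r-k-g+2}+\binom{n-c_g-k-g+1}{r-k-g+2}$ plus the correction $\sigma$ of Theorem \ref{thm1.6}(i) evaluated at $t=0$.

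That correction takes three values. It equals $t=0$ when $n\le r+c_g$ and $k=r-g+1$. It equals $1$ when $n\le r+c_g$ and $k=r-g+2$. It equals $0$ otherwise. So at $t=0$ the correction is $1$ exactly when $n\le r+c_g$ and $k=r-g+2$, and $0$ in every other case, which is precisely the $\sigma$ in the statement of Corollary \ref{x-0}.

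There is essentially no obstacle here. The only points needing care are the following:
\begin{itemize}
\item the first case of $\sigma$ collapses to $0$ when $t=0$;
\item no boundary case (such as $r=n-1$ or $k>r-g+2$) changes the form of the bound, since those cases were already absorbed into the statement of Theorem \ref{thm1.6}(i).
\end{itemize}
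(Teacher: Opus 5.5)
Your proposal is correct and matches the paper. The paper states this corollary as an immediate consequence of Theorem \ref{thm1.6}(i) at $t=0$. Setting $t=0$ removes the bracketed $t$-term, and the correction term becomes $1$ exactly when $n\le r+c_g$ and $k=r-g+2$, and $0$ otherwise.
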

				To obtain a corresponding upper bound on the coefficients of the characteristic polynomial, the following identity is required. It is not new and can be derived from Equation (3b) on page 8 in \cite{Riordan}.
				
				\begin{lem}\label{lem 2.7}\cite{Riordan}
					Let $m$ be a non-negative integer. Then, for any non-negative integers $q$ and $k$ with $k\le q$ and $m\ge q+1$,$$\sum_{i=0}^{q} \binom{m-i-1}{q-i} \binom{i}{k} = \binom{m}{q-k}.$$
				\end{lem}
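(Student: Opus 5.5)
This identity will follow from Vandermonde-type convolution, after rewriting the left-hand side as a coefficient extraction in a product of two generating functions. Put $a=m-q-1\ge 0$ (this is where $m\ge q+1$ is used). The identity becomes
\[
\sum_{i=0}^{q}\binom{a+q-i}{q-i}\binom{i}{k}=\binom{a+q+1}{q-k}.
\]
Since $\binom{a+j}{j}=[z^j](1-z)^{-(a+1)}$ for $j\ge0$, and $\sum_{i\ge 0}\binom{i}{k}z^i=z^k(1-z)^{-(k+1)}$, the left side is the coefficient of $z^q$ in the product
\[
(1-z)^{-(a+1)}\cdot z^k(1-z)^{-(k+1)}=z^k(1-z)^{-(a+k+2)}.
\]
The restriction $0\le i\le q$ is automatic, because the factor $\binom{a+q-i}{q-i}$ is the coefficient of $z^{q-i}$ and vanishes for $i>q$ under the paper's convention.

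Extracting the coefficient then gives
\[
[z^{q}]\,z^k(1-z)^{-(a+k+2)}=[z^{q-k}](1-z)^{-(a+k+2)}=\binom{a+k+1+q-k}{q-k}=\binom{a+q+1}{q-k}=\binom{m}{q-k},
\]
using $k\le q$ so that $q-k\ge 0$. This is exactly the right-hand side.

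The only delicate point is the binomial convention. The identification $\binom{m-i-1}{q-i}=\binom{a+q-i}{q-i}$ as a coefficient of $(1-z)^{-(a+1)}$ needs $m-i-1\ge 0$. For $0\le i\le q$ this follows from $m\ge q+1$, since then $m-i-1\ge q-i\ge0$. Terms with $i<k$ vanish on both sides of the computation, as $\binom{i}{k}=0$ there and the series for $z^k(1-z)^{-(k+1)}$ starts at $z^k$.

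A combinatorial proof is also available: count the $(q-k)$-subsets of $[m]$ by their complement structure, or equivalently apply the hockey-stick identity $q-k$ times starting from Vandermonde. The generating-function argument is shorter, and I would present that one.
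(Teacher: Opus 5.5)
Your proof is correct. It also supplies something the paper does not: the paper gives no argument for this lemma and only cites Equation (3b) of Riordan. That is a Vandermonde-type convolution which, after the symmetry $\binom{m-i-1}{q-i}=\binom{m-i-1}{m-q-1}$ (valid for $0\le i\le q$ because $m\ge q+1$), reads $\sum_{i}\binom{i}{k}\binom{m-1-i}{m-q-1}=\binom{m}{m-q+k}=\binom{m}{q-k}$. Your individual steps all check out. The substitution $a=m-q-1\ge 0$ makes each factor an ordinary binomial coefficient. The Cauchy product of $(1-z)^{-(a+1)}$ with $z^k(1-z)^{-(k+1)}$ produces exactly the range $0\le i\le q$. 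The extraction $[z^{q-k}](1-z)^{-(a+k+2)}=\binom{a+q+1}{q-k}$ is right.

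What your version buys over the bare citation is transparency about the hypothesis. Under the paper's convention ($\binom{m}{j}=0$ unless $0\le j\le m$), the condition $m\ge q+1$ is not cosmetic. For $m=q$ every term on the left vanishes, including $\binom{-1}{0}=0$ at $i=q$, while the right side is $\binom{q}{q-k}\neq 0$. This is exactly the condition that fails for the third sum in the proof of Theorem \ref{3.3} when $n\le r+c_g$, which is why that case is handled separately there. Your derivation shows precisely where the hypothesis enters; the citation leaves the reader to reconcile Riordan's conventions with the paper's.

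Your closing combinatorial aside is too vague to count as a second proof. The clean version counts $(m-q+k)$-subsets of $\{0,1,\dots,m-1\}$ by their $(k+1)$-st smallest element $i$. For each $i$ this gives $\binom{i}{k}\binom{m-1-i}{m-q-1}$ subsets.
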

				
				\begin{thm} \label{3.3}
					Let $M$ be a simple matroid of rank $r$ on $n$ elements with $1\le r\le n-1$, girth $g$ and $U_{g-1,g+1}$-free. Let $c_{g}$ denote the number of circuits of size $g$. Then, for every non-negative integer $k$,
					$$ (-1)^{r-k}[x^{k}]\chi_{M}(x)\le \binom{n}{r-k}-\binom{n-g+2}{r-k-g+2}+\binom{n-c_{g}-g+2}{r-k-g+2}+\psi, $$
					where $\psi =\binom{r-g+2}{k}-\binom{n-c_{g}-g+2}{r-k-g+2}$ if $n\le r+c_{g}$ and $0\le k\le r-g+2$, and $\psi =0$ otherwise.	
					
				\end{thm}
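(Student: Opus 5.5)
We expand the characteristic polynomial in powers of $x$ and bound each coefficient using Corollary \ref{x-0}. Since $\chi_M(\lambda)=(-1)^rT_M(1-\lambda,0)$, we write $T_M(x,0)=\sum_{i\ge0}a_ix^i$ with $a_i=[x^i]T_M(x,0)\ge0$. Then
\[
\chi_M(\lambda)=(-1)^r\sum_{i}a_i(1-\lambda)^i=(-1)^r\sum_i a_i\sum_k\binom ik(-1)^k\lambda^k,
\]
so
\[
(-1)^{r-k}[\lambda^k]\chi_M(\lambda)=\sum_{i=k}^{r}\binom ik a_i .
\]
All weights $\binom ik$ are nonnegative, so we may replace each $a_i$ by its upper bound from Corollary \ref{x-0}. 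This gives
\[
\sum_{i=k}^{r}\binom ik\left[\binom{n-i-1}{r-i}-\binom{n-i-g+1}{r-i-g+2}+\binom{n-c_g-i-g+1}{r-i-g+2}\right]+\binom{r-g+2}{k}\,[\,n\le r+c_g\,].
\]
The last term is the $\sigma$ contribution at $i=r-g+2$, present only when $k\le r-g+2$.

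Next we evaluate the three sums with Lemma \ref{lem 2.7}.
\begin{itemize}
\item For the first sum, take $m=n$, $q=r$. Since $n\ge r+1$, it equals $\binom{n}{r-k}$.
\item For the second sum, substitute $q=r-g+2$ and $m=n-g+2$. The summand becomes $\binom{m-i-1}{q-i}\binom ik$, and $m\ge q+1$ holds. The sum equals $\binom{n-g+2}{r-k-g+2}$. Terms with $i>q$ vanish under our convention, and if $k>q$ both sides are $0$.
\item For the third sum, take $m=n-c_g-g+2$ and $q=r-g+2$. When $n\ge r+c_g+1$, i.e.\ $m\ge q+1$, Lemma \ref{lem 2.7} gives $\binom{n-c_g-g+2}{r-k-g+2}$, and $\psi=0$ as required.
\end{itemize}

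The main obstacle is the case $n\le r+c_g$, where Lemma \ref{lem 2.7} does not apply to the third sum. Here $m=n-c_g-g+2\le q$. For $i\le q$ we have $m-i-1\ge0$ exactly when $i\le m-1$, and then $q-i>m-i-1$, so $\binom{m-i-1}{q-i}=0$. For $i\ge m$ the upper entry is negative, so the term is again $0$; the only exception would be $q-i=0$ with $m-i-1\ge0$, which is impossible. Hence the third sum vanishes entirely. The bound is therefore
\[
\binom n{r-k}-\binom{n-g+2}{r-k-g+2}+\binom{r-g+2}{k}\quad\text{for }k\le r-g+2 .
\]
Adding and subtracting $\binom{n-c_g-g+2}{r-k-g+2}$ gives exactly the stated form with $\psi$. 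For $k>r-g+2$ the second and third binomials vanish, $\sigma$ contributes nothing, and $\psi=0$. We would also check the boundary cases $k\le r$ with $k>q$ for the first sum, and $r-k-g+2<0$, against the binomial convention. These degenerate-range checks are where errors are most likely.
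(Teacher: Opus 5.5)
Your proposal is correct and follows essentially the same route as the paper. Both arguments run as follows: expand $T_M(1-x,0)$ to obtain $(-1)^{r-k}[x^k]\chi_M(x)=\sum_{i=k}^{r}\binom{i}{k}a_i$, bound each $a_i$ by Corollary~\ref{x-0}, evaluate the sums with Lemma~\ref{lem 2.7}, and, when $n\le r+c_g$, note that the third sum vanishes termwise while the $\sigma$-term at $i=r-g+2$ contributes $\binom{r-g+2}{k}$; the boundary cases you flag at the end do go through, because under the paper's convention every binomial with negative lower index vanishes.
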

				
				\begin{proof}
					Let $[x^{k}]T_{M}(x,0)=a_{k}$. Then
					\[
					T_M(1-x,0) = \sum_{i=0}^{r} a_i (1-x)^i = \sum_{i=0}^{r} a_i \sum_{j=0}^i \binom{i}{j} (-x)^j = \sum_{j=0}^{r} (-1)^j x^j \sum_{i=j}^{r} a_i \binom{i}{j}.
					\]
					This implies that
					\[
					[x^k]T_M(1-x,0) = (-1)^k \sum_{i=k}^{r} a_i \binom{i}{k}.
					\]
					By Corollary  \ref{x-0}, $$a_{i}\le \binom{n-i-1}{r-i} - \binom{n-i-g+1}{r-i-g+2} + \binom{n-c_{g}-i-g+1}{r-i-g+2}+ \sigma_i,$$ 	where $\sigma_i=1$ if $n\leq r+c_{g}$ and $i=r-g+2$, and $\sigma_i =0$ otherwise.
					By Lemma \ref{lem 2.7},
					\begin{eqnarray*}
						&&(-1)^{r-k}[x^{k}]\chi_M(x)\\
						&\le& \sum_{i=k}^{r} \left[\binom{n-i-1}{r-i}-\binom{n-i-g+1}{r-i-g+2}+\binom{n-c_{g}-i-g+1}{r-i-g+2} +\sigma_i\right] \binom{i}{k}\\
						&=&\sum_{i=0}^{r} \binom{n-i-1}{r-i} \binom{i}{k} -\sum_{i=0}^{r-g+2} \binom{n-i-g+1}{r-i-g+2} \binom{i}{k}\\
						&&+\sum_{i=0}^{r-g+2} \binom{n-c_{g}-i-g+1}{r-i-g+2} \binom{i}{k}+\sum_{i=k}^{r}\sigma_i\binom{i}{k}.
					\end{eqnarray*}
					
					If	 $n> r+c_{g}$ or $k>r-g+2$, then $\sigma_i =0$ for any $i\ge k$. By Lemma \ref{lem 2.7}, $(-1)^{r-k}[x^{k}]\chi_M(x)\leq \binom{n}{r-k}-\binom{n-g+2}{r-k-g+2}+\binom{n-c_{g}-g+2}{r-k-g+2}$ and \(\psi=0\) in this case.
					
					If $n\le r+c_g$ and $k\le r-g+2$, by the definition of $\sigma_i$, only the term $i=r-g+2$ contributes. Hence, $	\sum_{i=k}^{r}\sigma_i\binom{i}{k} = \binom{r-g+2}{k}.$ Furthermore, for every $0\le i\le r-g+2$, we have $n-c_g-i-g+1 \le r-i-g+1 <r-i-g+2$; thus $\binom{n-c_g-i-g+1}{r-i-g+2}=0$. It follows that $(-1)^{r-k}[x^k]\chi_M(x) \le \binom{n}{r-k} - \binom{n-g+2}{r-k-g+2} +\binom{r-g+2}{k}$.					
					In this case, the estimate obtained directly from Corollary \ref{x-0} is
						\[
						\begin{aligned}
							(-1)^{r-k}[x^k]\chi_M(x)
							\leq{}&
							\binom{n}{r-k}
							-\binom{n-g+2}{r-k-g+2}+
							\binom{n-c_g-g+2}{r-k-g+2}\\
							&+
							\binom{r-g+2}{k}
							-\binom{n-c_g-g+2}{r-k-g+2}.
						\end{aligned}
						\]
			By the definition of $\psi $, the conclusion holds.
				\end{proof}
				\textbf{Remark.} When $n>r+c_g$, Theorem \ref{thm1.6} implies Theorem \ref{thm1.4}, when $n<r+c_g$, Theorem \ref{thm1.6} and Theorem \ref{thm1.4} do not imply each other.
				
				 Accordingly, under the binomial convention adopted in this paper, we treat the case $m_2<0$ separately in Lemma \ref{lem2.1} and add the correction term $1$ in Corollary \ref{x-0} when $n\le r+c_g$ and $k=r-g+2$. Its contribution under the binomial transform gives the correction term $\psi$ in Theorem \ref{3.3}.

				Recently, Ma, Guan and Jin \cite{Ma} established the following result, which is consistent with Theorem \ref{thm1.6}.
				\begin{thm}\label{thm3.2}\cite{Ma}
					Let $M$ be a matroid of rank $r$ on $n$ elements. If $ r\le n-1 $, then for any real number $t$ with $0\leq t\leq 1$, there exist integers $k'$, $k''$ such that 
					\item (i) for any $k>k'$,
					\begin{eqnarray*}
						[x^{k}]T_{M}(x,t)&=&\binom{n-k-1}{r-k}-\sum_{j=0}^{r-k} \binom{n-k-j-1}{r-k-j}|C_{j}|+ (t-1)\sum_{j=0}^{r-k+1} \binom{n-k-j-1}{r-k-j+1}|C_{j}|,
					\end{eqnarray*}
					where $C_{j}$ is the set of circuits of size $j$;
					\item (ii) for any $k>k''$,
					\begin{eqnarray*}
						[y^{k}]T_{M}(t,y)=\binom{n-k-1}{r-1}-\sum_{j=r+k}^{n} \binom{j-k-1}{r-1}|H_{j}|+ (t-1)\sum_{j=r+k-1}^{n} \binom{j-k-1}{r-2}|H_{j}|,
					\end{eqnarray*}
					where $H_{j}$ is the set of hyperplanes of size $j$.
				\end{thm}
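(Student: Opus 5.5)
The plan is to work from the rank-generating (corank–nullity) expansion
\[
T_M(x,y)=\sum_{A\subseteq E}(x-1)^{r-rk(A)}(y-1)^{|A|-rk(A)},
\]
which is equivalent to the deletion–contraction definition (a routine induction on $|E|$). Setting $y=t$ and extracting $[x^k]$ gives
\[
[x^k]T_M(x,t)=\sum_{A\subseteq E}(-1)^{r-rk(A)-k}\binom{r-rk(A)}{k}(t-1)^{|A|-rk(A)}.
\]
Only sets with $rk(A)\le r-k$ contribute. I take $k'=r-g+1$, where $g$ is the girth. Then $k>k'$ forces $rk(A)\le g-2$.

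The first step is a structural claim: every $A$ with $rk(A)\le g-2$ has nullity $|A|-rk(A)\le 1$. Indeed, nullity at least $2$ means $A$ contains two distinct circuits $C_1,C_2$. Then $|C_1\cup C_2|\ge g+1$, and $C_1\cup C_2$ has nullity at least $2$, so $rk(A)\ge rk(C_1\cup C_2)\ge g-1$, a contradiction. Moreover, a nullity-one set contains exactly one circuit. Hence, among $i$-subsets with $i\le g-1$:
\begin{itemize}
\item the nullity-one $i$-sets containing a given $j$-circuit are exactly its $\binom{n-j}{i-j}$ supersets of size $i$ (any such superset has rank $\le i-1\le g-2$, so it cannot have nullity $\ge 2$);
\item the independent $i$-sets number $\binom{n}{i}-\sum_j|C_j|\binom{n-j}{i-j}$.
\end{itemize}

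Substituting, the coefficient splits into three sums.
\begin{itemize}
\item The uniform part is $\sum_i(-1)^{r-i-k}\binom{r-i}{k}\binom{n}{i}$ over independent-size $i\le r-k$.
\item The correction part is $-\sum_j|C_j|\sum_i(-1)^{r-i-k}\binom{r-i}{k}\binom{n-j}{i-j}$.
\item The nullity-one part is $(t-1)\sum_j|C_j|\sum_i(-1)^{r-i+1-k}\binom{r-i+1}{k}\binom{n-j}{i-j}$, since $rk(A)=i-1$ for these sets.
\end{itemize}
Each inner sum is an instance of the alternating convolution
\[
\sum_{s\ge 0}(-1)^{m-s}\binom{m-s}{k}\binom{N}{s}=\binom{N-k-1}{m-k}\qquad(N>m),
\]
which follows from Vandermonde's identity with $\binom{-a}{b}$ (equivalently from Lemma \ref{lem1.2} applied to $U_{m,N}$ at $y=1$ after the shift $x\mapsto x-1$). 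Applying it with $N=n$, $m=r$ gives $\binom{n-k-1}{r-k}$. After reindexing $s=i-j$, it gives $\binom{n-j-k-1}{r-j-k}$ and $\binom{n-j-k-1}{r-j-k+1}$ for the other two sums. These are precisely the three terms of (i), including the stated ranges of $j$.

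Part (ii) follows by duality. By (\ref{dual-Tutte}), $[y^k]T_M(t,y)=[x^k]T_{M^*}(x,t)$. By (\ref{dual-rank}), $M^*$ has rank $n-r$. By (\ref{dual-circult}), the $j$-circuits of $M^*$ correspond bijectively to the hyperplanes of $M$ of size $n-j$. Substituting into (i) for $M^*$ and reindexing by $j\mapsto n-j$ turns $\binom{n-k-1}{n-r-k}$ into $\binom{n-k-1}{r-1}$, and likewise for the other terms, with $k''$ the dual threshold.

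The main obstacle I expect is the boundary bookkeeping in the convolution identity. One must check that it holds under the paper's convention $\binom{m}{j}=0$ unless $0\le j\le m$, in particular when $N\le m$ (which can occur for $N=n-j$ with large $j$), and that the summation ranges $j\le r-k$ and $j\le r-k+1$ come out exactly. I would first verify the identity for $N>m$ by the generating-function argument, then handle the degenerate terms directly: they only occur for $j>r-k+1$, where they must vanish. If needed, I would enlarge $k'$ slightly to stay away from these cases.
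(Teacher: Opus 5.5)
The paper gives no proof of this statement; it is quoted from Ma, Guan and Jin, so there is no in-paper argument to compare your route with. On its own terms, your framework is sound: the corank--nullity expansion, splitting the contributing sets by nullity, the alternating convolution, and duality. The problem is that the threshold $k'=r-g+1$ empties it of content.

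If $rk(A)\le g-2$, then $A$ is independent, because a circuit $C\subseteq A$ would give $rk(A)\ge rk(C)=|C|-1\ge g-1$. Also, for $k>r-g+1$ every index in both sums satisfies $j\le r-k+1\le g-1$, so every $|C_j|$ vanishes. What you actually prove is $[x^k]T_M(x,t)=\binom{n-k-1}{r-k}$ for $k\ge r-g+2$; the nullity-one bookkeeping only ever touches empty families. Your justification of the structural claim is also a non sequitur: nullity $\ge 2$ bounds the rank from above, not from below. The correct reason is the one just given, and it excludes nullity one as well.

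Formally this suffices for the existential statement, but so does $k'=r$, where both sides are $0$. The paper uses the theorem in its comparison for $k'<k\le r-g$, which needs $k'\le r-g-1$, and your argument never reaches that range. Your dual threshold $k''=h-r+1$ has the same defect. The duality step also needs $r\ge 1$: for $r=0$ the dual is free, (i) does not apply to it, and indeed $[y^n]T_M(t,y)=1$ while the right-hand side of (ii) is $0$.

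The missing idea is the right invariant for the threshold. Let $s=\min\{rk(A):|A|-rk(A)\ge 2\}$ and take $k'=r-s$.
\begin{itemize}
\item Any $A'\subseteq A$ of nullity exactly $2$ contains two distinct circuits. Hence $|A'|\ge g+1$ and $rk(A)\ge rk(A')=|A'|-2\ge g-1$, so $s\ge g-1$ always.
\item The bound $s=g-1$ is attained by $U_{2,n}$ with $n\ge 4$. There the formula does fail at $k=0$ for $t=0$, so $r-g+1$ is the best threshold obtainable from the girth alone.
\item For $U_{g-1,g+1}$-free $M$ one gets $s\ge g$. Two $g$-circuits whose union has size $g+1$ force, by circuit elimination, every $g$-subset of that union to be a circuit, giving a $U_{g-1,g+1}$ restriction.
\end{itemize}
For $k>r-s$, every set of rank at most $r-k$ has nullity at most $1$. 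Your three-way decomposition then goes through with genuinely nonzero circuit terms.

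Your diagnosis of the boundary case is also off. In the nullity-one part, $N-m=(n-j)-(r-j+1)=n-r-1$ does not depend on $j$, so the degenerate case is exactly $r=n-1$, not large $j$. There the inner sum is $[x^k]x^{m}=\delta_{k,m}$, whereas the formula contributes $\binom{-1}{0}=0$ under the paper's convention. Compare the correction $\sigma=t$ at $k=n-g$ in Lemma~\ref{unique-circuit}. So for $r=n-1$ you must either keep $k'\ge r-g+1$ or adopt the convention $\binom{-1}{0}=1$.
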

				\begin{proof}[Comparison with Theorem \ref{thm1.6}(i)]
					For non-negative integers $a,b$, a positive integer $c_g$, and $0\le i<c_g$, we have $$\sum_{i=0}^{c_{g}-1} \binom{a-i}{b}=\binom{a+1}{b+1}-\binom{a+1-c_{g}}{b+1} \text{ and }  \binom{a}{b}\ge \binom{a-i}{b}.$$  Hence, $$ c_{g}\binom{a}{b} \ge \sum_{i=0}^{c_{g}-1}\binom{a-i}{b}. $$  For $k'<k\le r-g$, since $C_j=\varnothing$ for every $j<g$, 
					Theorem \ref{thm3.2}(i) gives
					\begin{eqnarray*}
						[x^{k}]T_{M}(x,t)&=&\binom{n-k-1}{r-k}-\sum_{j=g}^{r-k} \binom{n-k-j-1}{r-k-j}|C_{j}|- (1-t)\sum_{j=g}^{r-k+1} \binom{n-k-j-1}{r-k-j+1}|C_{j}|\\
						&\le &\binom{n-k-1}{r-k}-\binom{n-k-g-1}{r-k-g}|C_{g}|- (1-t) \binom{n-k-g-1}{r-k-g+1}|C_{g}|\\
						&\le &\binom{n-k-1}{r-k}-\sum_{i=0}^{c_{g}-1} \binom{n-k-g-1-i}{r-k-g}- (1-t)\sum_{i=0}^{c_{g}-1} \binom{n-k-g-1-i}{r-k-g+1}\\
						&=&\binom{n-k-1}{r-k}-\binom{n-k-g}{r-k-g+1}+\binom{n-k-g-c_{g}}{r-k-g+1}\\&&+(1-t)\left[\binom{n-k-g-c_{g}}{r-k-g+2}-\binom{n-k-g}{r-k-g+2}\right].
					\end{eqnarray*}
					Thus, for $k'<k\le r-g$, Theorem \ref{thm3.2}(i) yields the upper bound in Theorem \ref{thm1.6}(i).
				\end{proof}

				It is easy to see that for any $E'\subseteq E$, $E-E'$ is a bond (i.e., minimal edge cut) of $G$ if and only if $E'$ is a hyperplane in $M(G)$, and $E'$ is the edge set of a cycle of $G$ if and only if
				$E'$ is a circuit of $M(G)$. Note that $M(G)$ must be $U_{g-1,g+1}$-free when $G$ is simple with girth $g$. Thus, the following two corollaries follow from
				Theorems \ref{thm1.3}, \ref{thm1.6} and \ref{thm1.7}.

				\begin{cor}
					Let $G=(V,E)$ be a connected simple graph with girth $g$, edge connectivity $\kappa'\ge 3$,  and $|V|\leq |E|-1$. Let $M=M(G)$, and let $a$ and $c_{g}$ denote the number of $\kappa'$-edge cuts and the number of cycles of length $g$, respectively. Then for any non-negative integer $k$ and any real number $t$ with $0\leq t\leq 1$,
					\begin{flalign*}
						&(i)\qquad
						&[x^k]T_M(x,t)
						\leq&
						\binom{|E|-k-1}{|V|-1-k}
						-
						\binom{|E|-k-g}{|V|-k-g}
						+
						\binom{|E|-c_g-k-g}{|V|-k-g}
						&&\\
						&&
						& +(1-t)
						\left[
						\binom{|E|-c_g-k-g}{|V|-k-g+1}
						-
						\binom{|E|-k-g}{|V|-k-g+1}
						\right]
						+\sigma,
						&&
					\end{flalign*}
					where $\sigma=t$ if $|E|\le |V|-1+c_{g}$ and  $k=|V|-g$, $\sigma=1-t$ if $|E|\le |V|+c_{g}$ and  $k=|V|-g+1$, and $\sigma=0$ otherwise.
					\begin{flalign*}
						&(ii)\qquad
						&[y^k]T_M(t,y)
						\leq&
						\binom{|E|-k-1}{|V|-2}
						-
						\binom{|E|-k-\kappa'}{|V|-2}
						+
						\binom{|E|-k-\kappa'-a}{|V|-2-a}
						&&\\
						&&
						& +(1-t)
						\left[
						\binom{|E|-k-\kappa'-a}{|V|-3-a}
						-
						\binom{|E|-k-\kappa'}{|V|-3}
						\right]
						+\theta,
						&&
					\end{flalign*}
					where $\theta=t$ if $|V|\le a+1$ and  $k=|E|-|V|-\kappa'+2$, $\theta=1-t$ if $|V|\le a+2$ and  $k=|E|-|V|-\kappa'+3$, and $\theta=0$ otherwise.
				\end{cor}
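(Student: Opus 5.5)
The plan is to specialize Theorem \ref{thm1.6} to $M=M(G)$ and then rewrite the resulting bounds in the stated form. For a connected graph, $n=|E|$ and $r=|V|-1$. The hypothesis $|V|\le |E|-1$ gives $1\le r\le n-2\le n-1$. Since $G$ is simple, $M(G)$ is simple, and by the remark preceding the corollary $M(G)$ is $U_{g-1,g+1}$-free. Circuits of $M(G)$ are cycles of $G$, so $c_g$ has the same meaning in both statements. Part (i) should therefore follow directly from Theorem \ref{thm1.6}(i), applied with these substitutions.

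For part (ii), hyperplanes of $M(G)$ are exactly the complements of bonds. The largest hyperplanes are therefore complements of the smallest bonds, and a minimum edge cut of a connected graph is a bond. This gives $h=|E|-\kappa'$ and $H_h=a$. The hypothesis $\kappa'\ge 3$ gives $h\le n-3$. For the contraction hypothesis of Theorem \ref{thm1.6}(ii), note that any contraction of a graphic matroid is graphic, hence binary. On the other hand, $U_{2,n-h+1}=U_{2,\kappa'+1}$ with $\kappa'+1\ge 4$ contains $U_{2,4}$ as a restriction, so it is not binary. Hence $M(G)$ has no contraction isomorphic to $U_{2,n-h+1}$, and Theorem \ref{thm1.6}(ii) applies with $n=|E|$, $r=|V|-1$, $h=|E|-\kappa'$ and $H_h=a$.

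The remaining work is algebraic. I would convert the ``$+t[\cdots]$'' form into the ``$(1-t)[\cdots]$'' form using Pascal's rule $\binom{m+1}{j}=\binom{m}{j}+\binom{m}{j-1}$. In (i) this gives
\[
-\binom{n-k-g+1}{r-k-g+2}+t\binom{n-k-g}{r-k-g+2}=-\binom{n-k-g}{r-k-g+1}-(1-t)\binom{n-k-g}{r-k-g+2},
\]
and the analogous identity holds for the terms involving $c_g$. With $r=|V|-1$ these are exactly the binomials appearing in the corollary. Part (ii) is handled the same way, with $h-k$ and $h-k-H_h$ in place of $n-k-g$ and $n-c_g-k-g$.

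The main obstacle is that, under the paper's convention, Pascal's rule fails at one boundary point: $\binom{0}{0}=1$, but $\binom{-1}{0}+\binom{-1}{-1}=0$. This failure happens exactly when the $c_g$-term (respectively, the $H_h$-term) has negative upper index and lower index $0$ or $1$, which occurs only for $k=r-g+1$ or $k=r-g+2$ (respectively, $k=h-r+1$ or $k=h-r+2$). In those cases I would verify directly that the discrepancy created by rewriting, together with the $\sigma$ (respectively, $\theta$) of Theorem \ref{thm1.6}, is bounded by the correction terms $t$ and $1-t$ stated in the corollary. Where that comparison does not close, I would fall back on Theorem \ref{thm1.3}, which handles $k=r-g+2$ directly, and on Theorem \ref{thm1.7} at $t=1$. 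I expect the slightly shifted threshold $|E|\le |V|+c_g$ in the corollary (rather than $n\le r+c_g$) to come out of exactly this boundary bookkeeping, so I would check each of the two exceptional values of $k$ by hand.
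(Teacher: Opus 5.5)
Your proposal is correct and follows the paper's route: the paper only says the corollary follows from Theorems \ref{thm1.3} and \ref{thm1.6}, with $n=|E|$, $r=|V|-1$, $h=|E|-\kappa'$, $H_h=a$, and your hypothesis checks plus the Pascal-rule rewriting are what that citation needs. Your boundary check does close: the only Pascal defect is at $k=r-g+2$ with $|E|=|V|+c_g$ (resp.\ $k=h-r+2$ with $|V|=a+2$), at $k=r-g+1$ the corollary coincides with Theorem \ref{thm1.6}, and at $k=r-g+2$ the right-hand side collapses to $\binom{n-k-1}{r-k}$ (resp.\ $\binom{n-k-1}{r-1}$) — exactly the bound of Theorem \ref{thm1.3} (resp.\ its dual), which is needed there because Theorem \ref{thm1.6} is weaker by $t$ in that case.
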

				
				\begin{cor}
					Let $G=(V,E)$ be a connected graph with at least two vertices, girth $g$, edge connectivity $\kappa'$,  and $|V|\leq |E|-1$. Let $M=M(G)$, and let $a$ and $c_{g}$ be the number of $\kappa'$-edge cuts and the number of cycles of length $g$, respectively. Then for any non-negative integer $k$,
					\begin{flalign*}
						&(i)\qquad
						&&
						[x^k]T_M(x,1) 
						\leq
						\binom{|E|-k-1}{|V|-1-k}
						-
						\binom{|E|-k-g}{|V|-k-g}
						+
						\binom{|E|-c_g-k-g}{|V|-k-g}
						+\sigma,
						&&
					\end{flalign*}
					where $\sigma=1$ if $|E|\le |V|-1+c_g$ and $k=|V|-g$, and $\sigma=0$ otherwise.
					\begin{flalign*}
						&(ii)\qquad 
						&&
						[y^k]T_M(1,y)
						\leq
						\binom{|E|-k-1}{|V|-2}
						-
						\binom{|E|-k-\kappa'}{|V|-2}
						+
						\binom{|E|-k-\kappa'-a}{|V|-2-a}
						+\theta,
						&&
					\end{flalign*}
					where $\theta=1$ if $|V|\le a+1$ and $k=|E|-\kappa'-|V|+2$, and $\theta=0$ otherwise.
				\end{cor}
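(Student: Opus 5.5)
The plan is to obtain the statement as a direct specialization of Theorem \ref{thm1.7} to $M=M(G)$, using Theorem \ref{thm1.3} and its dual form to absorb the boundary indices where the corollary's correction term is smaller than the one in Theorem \ref{thm1.7}.

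\textbf{Translating the parameters.} Since $G$ is connected, $n=|E|$ and $r=|V|-1$. The assumption $|V|\ge 2$ gives $r\ge 1$, and $|V|\le |E|-1$ gives $r\le n-2\le n-1$, so Theorem \ref{thm1.7} applies. The same inequality also guarantees that $G$ has a cycle, so $g$ is defined. Circuits of $M(G)$ are exactly the edge sets of cycles, so the girth is $g$ and the number of $g$-circuits is $c_g$. Hyperplanes of $M(G)$ are exactly the complements of bonds. A minimum edge cut is necessarily a bond, since a proper subset of it would be a smaller cut. Hence the largest hyperplane has size $h=|E|-\kappa'$, and the hyperplanes of that size correspond bijectively to the $\kappa'$-edge cuts, so $H_h=a$. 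This identification, especially the claim that every $\kappa'$-edge cut is a bond, is the one genuinely graph-theoretic step. I expect it to be the main point needing care.

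\textbf{Part (i).} Substituting into Theorem \ref{thm1.7}(i) gives
\[
\binom{|E|-k-1}{|V|-1-k}-\binom{|E|-k-g}{|V|-k-g}+\binom{|E|-c_g-k-g}{|V|-k-g}+\sigma.
\]
Here Theorem \ref{thm1.7} places $\sigma=1$ whenever $|E|\le |V|-1+c_g$ and $k\in\{|V|-g,\,|V|-g+1\}$, whereas the corollary allows it only at $k=|V|-g$. For $k\ge |V|-g+1$, the lower index $|V|-k-g$ is negative, so the last two binomials vanish and the claimed bound is just $\binom{|E|-k-1}{|V|-1-k}$. For these $k$ I would invoke Theorem \ref{thm1.3} with $t=1$ instead. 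Its correction term $\sigma'$ vanishes because $r<n$. For $k\le |V|-g$ the two correction terms agree, so Theorem \ref{thm1.7}(i) gives the claim directly.

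\textbf{Part (ii).} Substituting $r-1=|V|-2$, $h-k=|E|-\kappa'-k$ and $r-H_h-1=|V|-2-a$ into Theorem \ref{thm1.7}(ii) gives the displayed expression. The condition $r\le H_h$ becomes $|V|\le a+1$. Theorem \ref{thm1.7} adds $\theta=1$ for $k\in\{h-r+1,\,h-r+2\}=\{|E|-\kappa'-|V|+2,\,|E|-\kappa'-|V|+3\}$, while the corollary allows it only at the first value.

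At $k=h-r+2$ (and at any larger $k$) under $|V|\le a+1$, the two subtracted and added binomials vanish: $\binom{r-2}{r-1}=0$, and the lower index $r-a-1$ is negative. So it suffices to show $[y^k]T_M(1,y)\le\binom{n-k-1}{r-1}$. For this I would use the dual of Theorem \ref{thm1.3}. By (\ref{dual-Tutte}), $[y^k]T_M(1,y)=[x^k]T_{M^*}(x,1)$, and $M^*$ has rank $n-r$. Theorem \ref{thm1.3} then gives the bound $\binom{n-k-1}{n-r-k}=\binom{n-k-1}{r-1}$, with no correction term because $n-r<n$.

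For the remaining $k$, the $\theta$ of Theorem \ref{thm1.7}(ii) coincides with that of the corollary. This completes the argument.
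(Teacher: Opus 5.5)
Your proposal is correct and takes essentially the paper's approach. The paper obtains this corollary by specializing Theorem \ref{thm1.7} to $M(G)$ with $n=|E|$, $r=|V|-1$, $h=|E|-\kappa'$ and $H_h=a$, and uses Theorem \ref{thm1.3} to handle the boundary indices $k=r-g+2$ and $k=h-r+2$ (for the latter via duality); you supply the identifications (minimum edge cuts are bonds, complements of bonds are hyperplanes) and the boundary bookkeeping that the paper leaves implicit.
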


				\section{Conclusions and Problems}
				In this paper, we study the upper bounding of the coefficients for polynomials $T_M(x,t)$, where $0\le t\le 1$ and $M$ is a simple matroid with girth $g$ that is $U_{g-1,g+1}$-free.	The assumption \(0\leq t\leq 1\) cannot be replaced by \(t>1\) or \( t<0 \).
				\begin{exm}
					Let \(M=M(K_4)\). Then $n=6$, $r=3$, $g=3$, $c_3=4$, and $T_M(x,t)=x^3+3x^2+2x+4xt+t^3+3t^2+2t$. For $k=1$, the right-hand side of Theorem \ref{thm1.6}(i) is $3+3t$, whereas $[x^{1}]T_M(x,t)=2+4t$. Thus the bound fails for every $t>1$. For $k=2$, that right-hand side is $3+t$, whereas $[x^2]T_M(x,t)=3$, so the bound fails for every $t<0$.
				\end{exm} The preceding results motivate the following question.
				
				\begin{que}
					Which classes of simple nonuniform matroids $M$ of girth $g$
					that contain a $U_{g-1,g+1}$-restriction satisfy the bound
					in Theorem \ref{thm1.6}(i) for every non-negative integer $k$
					and every $0\le t\le 1$?
				\end{que}
				
				\section*{Use of generative AI}
				
				During the preparation of this work, the author used ChatGPT to correct grammatical errors, refine wording, and improve sentence structure and clarity, as well as to assist with \LaTeX{} formatting. After using this tool, the authors reviewed and edited the relevant content and take full responsibility for the final manuscript.

				\section*{Acknowledgements}
				
			This work was supported by National Natural Science Foundation of China (Nos.~12401462 and 12371356), the Natural Science Foundation of Shanxi Province (No.~202403021222034), and the Shanxi Key Laboratory of Digital Design and Manufacturing (No. 202204010931025).
				
			\end{document}